\documentclass[12pt]{amsart}
\usepackage[T1]{fontenc}
\usepackage[a4paper,margin=1in]{geometry}
\usepackage{graphicx}
\usepackage{amssymb}
\usepackage{stmaryrd}
\usepackage{enumitem}
\usepackage{tikz}
\usepackage{float}
\usepackage{algorithm}
\usepackage{algpseudocode}
\usepackage{url}
\usepackage{mathtools}
\usepackage{amsmath,amsfonts,amsthm}
\usepackage{mathrsfs}

\usepackage{amsthm}

\theoremstyle{plain}
\newtheorem{theorem}{Theorem}[section]
\newtheorem{lemma}[theorem]{Lemma}
\newtheorem{proposition}[theorem]{Proposition}
\newtheorem{corollary}[theorem]{Corollary}
\newtheorem{conjecture}[theorem]{Conjecture}

\theoremstyle{definition}
\newtheorem{definition}{Definition}
\newtheorem{example}{Example}
\newtheorem{question}{Question}
\newtheorem{openproblem}{Open Problem}

\theoremstyle{remark}

\usepackage{multirow}

\newcommand{\fphi}{\varphi}
\newcommand{\CE}{\operatorname{CE}}
\newcommand{\ice}{\operatorname{ice}}
\newcommand{\aice}{\operatorname{ice}^*}
\newcommand{\N}{\mathbb{N}}

\newcommand{\AlphA}{\mathcal{A}}
\newcommand{\FD}{\mathfrak{F}}
\renewcommand{\qedsymbol}{$\blacksquare$}
\newcommand{\doi}[1]{\href{https://doi.org/#1}{DOI: #1}}

\usepackage[colorlinks,
linkcolor=blue,
anchorcolor=blue,
citecolor=blue, 
pdfencoding=auto,
unicode,psdextra
]{hyperref}

\author{J. Hamoud}
\address{\textbf{Jasem Hamoud:} Department of Discrete Mathematics, Moscow Institute of Physics and Technology}
\email{hamoud.math@gmail.com}
\thanks{ }

\title[Duality in Biperiodic Fibonacci Words]{Duality in Biperiodic Fibonacci Words Substitution Frequencies and Combinatorial Invariants}

\date{}
\begin{document}

\begin{abstract}
In this paper, a natural duality on the family of biperiodic Fibonacci words $\mathfrak{F}^{(a,b)}$ generated by the directive sequence $(a,b,a,b,\dots)$. Both of $\mathfrak{F}^{(a,b)}$ and $\mathfrak{F}^{(b,a)}$ are related by the explicit morphism $\sigma_a:0\mapsto 0^a1$, $1\mapsto 0$, establishing a precise substitutional correspondence between them.
We compute the exact letter frequencies, give a complete description of the return words for each letter, prove the existence of arbitrarily long palindromic prefixes, and determine the continued fraction expansion of the slope $\theta^{(a,b)}$. 

This findings reveal that the apparent asymmetry in several invariants  arises uniformly from the length-redistribution mechanism induced by the morphism $\sigma_a$. 
\end{abstract}

\maketitle

\noindent\rule{15.9cm}{1.0pt}

\noindent
\textbf{
Keywords:} Fibonacci word; Sturmian word; biperiodic Fibonacci word; standard sequence; continued fraction; critical exponent; return words; palindromic complexity; combinatorics on words.

\medskip

\noindent
{\bf
MSC 2020:} 68R15 (Primary); 11B39, 37B10, 11A55 (Secondary).

\medskip

\noindent
{\bf
UDC:} 519.1

\noindent\rule{15.9cm}{1.0pt}

%======================
\section{Introduction}~\label{sec01intro}
Let $\AlphA=\{0,1\}$ be a binary alphabet. A \emph{finite word} over
$\AlphA$ is an element of the free monoid $\AlphA^{*}$, and an
\emph{infinite word} is an element of $\AlphA^{\N}$. Among all infinite binary words, the
\emph{Fibonacci word}
\[
  \FD = 0100101001001010010100100101001001\cdots,
\]
obtained  of the morphism $\sigma\colon 0\mapsto 01,\;
1\mapsto 0$.  For a finite word
$w=w_0w_1\cdots w_{k-1}$ denote by $|w|=k$ for its length, and $|w|_c$
for the number of occurrences of the letter $c\in\AlphA$ in $w$, where
$|w|=|w|_0+|w|_1$. A finite word $u$ is a \emph{factor} of a (finite or
infinite) word $w$ if $u=w_iw_{i+1}\cdots w_{i+|u|-1}$ with $i\geqslant 0$;
it is a \emph{prefix} if $i=0$. Denote by $\mathcal{L}(w)$ for the set of
all factors of $w$ (its \emph{language}), and $\mathcal{L}_n(w)$ for the
factors of length $n$. The \emph{factor complexity function} of $w$ is
$p_w(n)=|\mathcal{L}_n(w)|$.
A finite or infinite word $w$ is a \emph{palindrome} if it is invariant
under reversal; we write $\widetilde{w}$ for the reversal of a finite
word $w$.

A \emph{morphism} $\tau\colon\AlphA^{*}\to\AlphA^{*}$ is a monoid
homomorphism, determined by the finite data $\tau(0),\tau(1)$, and
extended to infinite words coordinatewise in the obvious way whenever
the extension is well defined. Following Ram\'irez, and Rubiano~\cite{Ramirez2015}, we fix integer parameters $a,b\ge1$ and seeds
\[
  \FD^{(a,b)}_0 = 1, \qquad \FD^{(a,b)}_1 = 0,
\]
and define $\FD^{(a,b)}_n$ for $n\geqslant 2$ by the recurrence already stated
in~\eqref{eqq01biperiodicrecurrence}. The
\emph{biperiodic word} is generated by the two parameter recurrence
\begin{equation}~\label{eqq01biperiodicrecurrence}
  \FD^{(a,b)}_n =
  \begin{cases}
    \bigl(\FD^{(a,b)}_{n-1}\bigr)^a\, \FD^{(a,b)}_{n-2} & \text{if } n \text{ is even},\\[4pt]
    \bigl(\FD^{(a,b)}_{n-1}\bigr)^b\, \FD^{(a,b)}_{n-2} & \text{if } n \text{ is odd},
  \end{cases}
\end{equation}
with $a,b\geqslant 1$ integers and suitable seeds $\FD^{(a,b)}_0,\FD^{(a,b)}_1$.

D.~Abdullah, and M.~Meisami~\cite{Abdullah2023Meisami} had  established that the factor complexity of the infinite word $\mathfrak{F}_b$ is full and the arithmetic complexity of the infinite word $\mathfrak{F}_b$ is full (see~\cite{Abdullah2025Hamoud}). 
The smallest critical exponent attained by any Sturmian word~\cite{Cassaigne2008, DamanikLenz2002, MousaviShallit2015} where its \emph{critical exponent} $\CE(\FD) = 2+\fphi$.
 The least repetitive of all aperiodic balanced binary
words, which is what makes it the canonical hard instance for
repetition-detection algorithms~\cite{AllouecheShallit2003} and the
extremal case in Dejean type repetition threshold
problems~\cite{CurrieRampersad2008}.

\medskip

The $k$-bonacci words, the Tribonacci
word, and the family relevant to this paper, the \emph{biperiodic
Fibonacci words} $\FD^{(a,b)}$ of Ram\'irez, and Rubiano~\cite{Ramirez2015} (see also~\cite{EdsonYayenie2009, MignosiPirillo1992})had showed that the letter frequencies of
$\FD^{(a,b)}$ converge to
\begin{equation}~\label{eqq1biperiodicfreqs}
  \lim_{n\to\infty}\frac{|\FD^{(a,b)}_n|_1}{|\FD^{(a,b)}_n|} = \frac{1}{1+\AlphA a},
  \qquad
  \lim_{n\to\infty}\frac{|f^{(a,b)}_n|_0}{|f^{(a,b)}_n|} = \frac{\AlphA a}{1+\AlphA a},
\end{equation}
where
\begin{equation}~\label{eqq1AlphAadef}
  \AlphA a = \AlphA a(a,b) = \frac{\sqrt{a^2b^2+4ab}+ab}{2},
\end{equation}
and that the critical exponent of $\FD^{(a,b)}$ is
\begin{equation}~\label{eqq1biperiodiccritical}
  \CE\bigl(\FD^{(a,b)}\bigr) = 2+\frac{\AlphA a}{a}+\frac{b}{\AlphA a}.
\end{equation}
Actually, if $a=b=1$ in~\eqref{eqq1AlphAadef} gives $\AlphA a=\fphi$. Then, the relation~\eqref{eqq1biperiodicfreqs} recovers the classical frequencies
$1/\fphi$ and $1/\fphi^2$, and the relation~\eqref{eqq1biperiodiccritical} recovers
$\CE(f)=2+\fphi$.  
According to~\eqref{eqq1AlphAadef}, it establishes that
$\AlphA a(a,b)=\AlphA a(b,a)$. Then,  $\FD^{(a,b)}$ is \emph{symmetric} under exchanging
the two periods $a$ and $b$ where the words $\FD^{(a,b)}$ and $\FD^{(b,a)}$ have
identical asymptotic letter frequencies for every choice of $a,b$. 
Based on the critical exponent relation~\eqref{eqq1biperiodiccritical} gives
\begin{equation}~\label{eqq1cedifference}
  \CE(a,b) - \CE(b,a)
  = (b-a)\left(\frac{\AlphA a}{ab} + \frac{1}{\AlphA a}\right),
\end{equation}
which is strictly positive whenever $b>a$ and strictly negative whenever
$b<a$, and vanishes only when $a=b$.

The asymmetry in~\eqref{eqq1cedifference} is not an
artefact of the specific closed form~\eqref{eqq1biperiodiccritical}, but
a shadow of a precise structural relationship between $\FD^{(a,b)}$ and
$\FD^{(b,a)}$ at the level of the words themselves. Unfolding the
recurrence~\eqref{eqq01biperiodicrecurrence} by one step shows that the
word $\FD^{(a,b)}_{n+1}$, viewed as a sequence indexed from level $n+1$
rather than level $n$, obeys the \emph{same} recurrence with the roles
of $a$ and $b$ interchanged. This observation by considering elementary at the level
of the recurrence, but with nontrivial consequences once the initial
data and the necessary letter relabelling are tracked carefully through
the limit $n\to\infty$, suggests the existence of an explicit
correspondence $\theta$, described precisely in
Section~\ref{sec01continuedfraction}, such that
\[
 \FD^{(b,a)}=\theta\bigl(\FD^{(a,b)}\bigr)
\]
as infinite words, up to a shift and a bounded prefix correction. We
call this the \emph{Parity-Shift Duality} of the biperiodic Fibonacci
family. 

A subsequent study extended this geometric side of the theory with a
self-similar decomposition and a family of ``biperiodic Fibonacci
snowflake'' polyominoes~\cite{FibSnowflake2024, Dumaine2009}. 
Critical exponents and their initial asymptotic refinements~\cite{Cassaigne2008, MousaviShallit2015}, balance and abelian
complexity for Sturmian and other morphic
words~\cite{BalkovaBrindaTurek2011, RichommeSaariZamboni2010}, and the
$S$-adic continued fraction coding of standard
sequences~\cite{AllouecheShallit2003, Lothaire2002}. The duality relationship
between $\FD^{(a,b)}$ and $\FD^{(b,a)}$ that organises our results appears
not to have been observed.

Section~\ref{sec02prelim} fixes notation and recalls the definitions of
critical exponent (global, initial, and asymptotic), balance, abelian
complexity, and palindromic complexity. Section~\ref{sec01sadic}
constructs the $S$-adic representation of $\FD^{(a,b)}$ via the periodic
continued fraction $[a,b,a,b,\ldots]$. Section~\ref{sec04ceduality} states
and proves the Parity-Shift Duality Theorem, subsection~\ref{sec5duality}
derives the duality for the four repetition exponent
variants and recovers combinatorially. 
Section~\ref{sec01continuedfraction} determine the continued fraction expansion of the
slope of the biperiodic Fibonacci word $\FD^{(a,b)}$, i.e.\ of the
irrational number.  
%======================
\section{Preliminaries}~\label{sec02prelim}

Since $\FD^{(a,b)}_n$ is a prefix of $\FD^{(a,b)}_{n+1}$ for every $n$, the sequence $\bigl(\FD^{(a,b)}_n\bigr)_{n\ge0}$
converges to a well-defined infinite word
\[
  \FD^{(a,b)} = \lim_{n\to\infty} \FD^{(a,b)}_n \in \AlphA^{\N},
\]
the \emph{biperiodic Fibonacci word}. Assume that $\sigma_a\colon 0\mapsto
0^a1,\,1\mapsto 0$ and $\sigma_b\colon 0\mapsto 0^b1,\,1\mapsto 0$ for
the two morphisms whose alternating application generates
$\FD^{(a,b)}$; this alternation is made precise via the $S$-adic
formalism in Section~\ref{sec01sadic}.  
For a finite word $w\ne\varepsilon$, the \emph{exponent} of $w$ is
$e(w) = |w|/\pi(w)$, where $\pi(w)$ is the smallest period of $w$. A
finite word $y$ is an \emph{$\AlphA a$-power} of a primitive word $x$, for
rational $\AlphA a>1$, if $y=x^{\lfloor\AlphA a\rfloor}x_p$ where $x_p$ is a
prefix of $x$ and $|y|=\AlphA a|x|$. 

\begin{definition}~\label{def01ce}
The critical exponent of $w$ is
\[
  \CE(w) = \sup\{\, e(u) : u \text{ a factor of } w \,\} \in (1,\infty].
\]
\end{definition}
Based on Definition~\ref{def01ce}, the initial critical exponent of $w$ is the supremum of $e(u)$
over \emph{prefixes} $u$ of $w$, $\ice(w) = \sup\{\, e(u) : u \text{ a prefix of } w\,\},$
and the asymptotic critical exponent of $w$ restricts the
supremum defining $\CE(w)$ to factors of unbounded length, $\aice(w) = \lim_{n\to\infty}\ \sup\{\, e(u) : u \in \mathcal{L}(w),\ |u|\geqslant n \,\}.$
\begin{definition}~\label{def01balance}
An infinite word $w$ is $C$-balanced ($C\in\N$) if for every pair
of factors $u,v\in\mathcal{L}(w)$ with $|u|=|v|$, we have
$\bigl||u|_1-|v|_1\bigr|\le C$. The \emph{balance function} $B(n)$
records the smallest $C$ for which this holds among all factor pairs of
length $n$, and $w$ is \emph{balanced} if $\sup_n B(n)<\infty$.
\end{definition}

Sturmian words are exactly the aperiodic $1$-balanced binary
words~\cite{MorseHedlund1940}; since $\FD^{(a,b)}$ is generated by an
alternation of two \emph{different} morphisms rather than the iteration
of one, it need not be Sturmian for $a\ne b$, and its balance function
is not known in closed form for the biperiodic family.

\begin{definition}~\label{def01abelian}
Two factors $u,v\in\mathcal{L}_n(w)$ are abelian equivalent,
$u\sim_{ab}v$, if $|u|_c=|v|_c$ for every letter $c\in\AlphA$. The
\emph{abelian complexity function} is
$\mathcal{AC}_w(n)=|\mathcal{L}_n(w)/\!\sim_{ab}|$.
\end{definition}

Abelian complexity is known explicitly for Sturmian
words~\cite{RichommeSaariZamboni2010} and for the words associated with
quadratic Parry numbers~\cite{BalkovaBrindaTurek2011}, but has not been
computed for $\FD^{(a,b)}$ as an explicit function of $(a,b)$.

\begin{definition}~\label{def01palcomplexity}
The palindromic complexity function of $w$ is
$P_w(n)=|\{u\in\mathcal{L}_n(w): u=\widetilde u\}|$, the number of
distinct length-$n$ palindromic factors.
\end{definition}

A certain derived word
$\Phi(\FD^{(a,b)}_n)$ is a palindrome for every $n$ and all $a\geqslant 2,\,
b\geqslant 1$ (see~\cite{Ramirez2015}). In fact, this is a statement about one distinguished
factor per level $n$, not about $P_{\FD^{(a,b)}}(n)$ itself, which remains
uncomputed for the family.

\begin{definition}~\label{def01central}
For a word $w$ with $|w|\ge2$, write $w^{-}$ for $w$ with its last two
letters deleted.
\end{definition}

\begin{theorem}[Vuillon~\cite{Vuillon2001}]
\label{thm01vuillon}
An aperiodic binary word $w$ is Sturmian if and only if every
$u\in\mathcal{L}(w)$ has exactly two return words in $w$.
\end{theorem}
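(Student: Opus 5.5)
The plan is to go through the geometric (rotation/mechanical word) description of Sturmian words and reduce the return-word count to a statement about first return maps of circle rotations. Recall that an aperiodic binary word is Sturmian exactly when it has factor complexity $p_w(n)=n+1$. Such a word is then a coding of an irrational rotation $R_\alpha\colon x\mapsto x+\alpha \pmod 1$ with respect to the partition $[0,1-\alpha)\cup[1-\alpha,1)$, up to the usual boundary conventions. A factor $u$ of length $n$ corresponds to a cylinder interval $I_u$: one of the $n+1$ intervals cut out of the circle by the points $0,-\alpha,-2\alpha,\dots,-n\alpha$.

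For the forward direction ($w$ Sturmian $\Rightarrow$ two return words), the steps are as follows.

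First, identify return words of $u$ with the distinct first-return itineraries of points of $I_u$ under $R_\alpha$.

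Second, the first return map of an irrational rotation to an interval is an exchange of at most three intervals. By the three-gap theorem, the return times take at most three values, and the third is the sum of the other two. The complementary fact to use is that $I_u$ is an interval whose endpoints are of the form $-i\alpha$ with $0\le i\le n$. For such intervals the induced map is an exchange of exactly two intervals (Rauzy induction), so there are exactly two return times and hence exactly two return words. Minimality of the rotation guarantees that both actually occur.

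Third, aperiodicity excludes the degenerate case of a single return word. A single return word $r$ would force $w$ to be a shift of $r^\omega$, which is periodic.

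For the converse, assume every factor has exactly two return words and show $p_w(n)=n+1$. The plan is a counting argument with right special factors. Since $w$ is aperiodic, $p_w(n+1)-p_w(n)\ge 1$ for all $n$, so it suffices to prove that each length has at most one right special factor, and that this factor has only two right extensions, which is automatic over a binary alphabet. Suppose instead that two distinct right special factors $v\ne v'$ of the same length exist. Take the shortest such pair; they share a common suffix and differ at some position further left. I would then build a factor $u$, namely a suitable long factor ending with the longest common suffix $s$, or $s$ itself, whose occurrences are followed by at least three distinct "next occurrence" patterns coming from the extensions $v0$, $v1$, $v'0$, $v'1$. This yields three return words and a contradiction. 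Concretely, I would use the Rauzy graph $\Gamma_n$: having exactly two return words for every vertex forces $\Gamma_n$ to have exactly two simple circuits through each vertex. This is only possible if $\Gamma_n$ has one right special and one left special vertex, which gives $p_w(n+1)-p_w(n)=1$.

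The main obstacle is this converse, and specifically the Rauzy-graph step. I must show carefully that a second bispecial or right special vertex produces a third first-return path for some well-chosen vertex, and that distinct paths give distinct return words (they do, since the paths are determined by their labels). I would also need the recurrence of $w$, which follows from every factor having return words at all, to ensure that $\Gamma_n$ is strongly connected.
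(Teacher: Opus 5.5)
The paper gives no proof of this statement. It quotes the result from Vuillon and uses it as a black box, so your proposal has to stand on its own.

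\textbf{Forward direction.} This is the standard rotation argument and is sound in outline, but the key step is misattributed.
\begin{itemize}
\item Rauzy induction yields two-interval exchanges only on the particular nested intervals $[0,\ell)$ it produces, not on an arbitrary cylinder $I_u=[-i\alpha,-j\alpha)$.
\item ``Two return times, hence two return words'' is not automatic. A letter of the first-return itinerary can change inside a continuity interval of the induced map, whenever an intermediate image crosses $0$ or $1-\alpha$.
\end{itemize}
The correct argument handles both points at once. Every point of $I_u$ where the return time or a letter of the itinerary can change has the form $-m\alpha$. No $-m\alpha$ with $m\le n$ lies inside $I_u$. An index check then shows that the only such point in the interior is $-m_0\alpha$, where $m_0>n$ is minimal with $-m_0\alpha\in I_u$. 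So there are at most two itineraries, and aperiodicity gives exactly two.

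\textbf{Converse: genuine gap.} Your Rauzy-graph step is false as stated.
\begin{itemize}
\item Return words of a vertex $u$ of $\Gamma_n$ correspond to first-return \emph{walks} from $u$ to $u$. These may wind around other cycles, and not every walk of $\Gamma_n$ is realized in $w$.
\item For the Fibonacci word, $\Gamma_1$ has edges $00,01,10$. The vertex $1$ lies on the single simple circuit $1\to0\to1$, yet it has the two return words $10$ and $100$. So ``exactly two simple circuits through each vertex'' neither follows from the hypothesis nor holds for Sturmian words.
\item Your fallback construction is left unspecified, and its natural instance $u=s$ fails. Take any aperiodic word built from the blocks $01$ and $0001$. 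The shortest pair of right special factors is $00,10$, and $s=0$ is strongly bispecial, yet it has only the return words $0$ and $01$.
\end{itemize}

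What is missing is a mechanism that actually forces a third return word. One that works uses the right special factors themselves.
\begin{enumerate}
\item Let $R_1\neq R_2$ be right special of length $n$. Recurrence holds, because a non-recurrent word has a prefix that occurs only once and so has no return words.
\item By recurrence, both right extensions of $R_i$ follow infinitely many occurrences of $R_i$. A return word determines the letter following its occurrence. Hence, if $R_i$ has only two return words, the return word starting at an occurrence of $R_i$ is fixed by the next letter.
\item Consider an occurrence of $R_1$ at position $t$. The letter at position $t+n$ lies inside the complete return word $rR_2$ of the occurrence of $R_2$ in progress at $t$. It is therefore determined by that return word and the offset. The same holds with $R_1,R_2$ exchanged, and any other right special window is handled by $R_1$. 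At all remaining windows the next letter is forced.
\item Once both $R_1$ and $R_2$ have occurred, the rest of $w$ is a deterministic function of a finite state. This state is which return word of each $R_i$ is in progress, together with the offset into it. Hence $w$ is eventually periodic, contradicting aperiodicity.
\end{enumerate}
This gives exactly one right special factor of each length, hence $p_w(n)=n+1$.
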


\subsection{Problem Statement}~\label{subsec00problem}

Two invariants of $\FD^{(a,b)}$ built from
the \emph{same} algebraic quantity $\AlphA a(a,b)$ behave differently
under the exchange $a\leftrightarrow b$.

\begin{conjecture}~\label{conj01dualityinformal}
There is an explicit, effectively computable map $\theta$ on infinite
binary words, built from a letter relabelling and a bounded prefix
correction, such that
\[
  \FD^{(b,a)} = \theta\bigl(\FD^{(a,b)}\bigr)
  \qquad \text{for all } a,b\geqslant 1.
\]
\end{conjecture}

This yields the four questions that structure the remainder of the paper.

\begin{question}~\label{q01ce}
Does the duality of Conjecture~\ref{conj01dualityinformal} extend to
the initial critical exponent $\ice$ and the asymptotic critical
exponent $\aice$? If so, what are $\ice(\FD^{(a,b)})$ and
$\aice(\FD^{(a,b)})$ in closed form; if not, where does duality break down?
\end{question}

\begin{question}~\label{q02balance}
What is the balance function $B(n)$ of $\FD^{(a,b)}$, and is $B$ invariant
under $\theta$? In particular, for which $(a,b)$ is $\FD^{(a,b)}$ balanced
in the sense of Definition~\ref{def01balance}, and does the answer
depend on $\min(a,b)$, $\max(a,b)$, or only on $\AlphA a(a,b)$?
\end{question}

\begin{question}~\label{q03abelian}
What is the abelian complexity function $\mathcal{AC}_{\FD^{(a,b)}}(n)$ as
an explicit function of $a,b,n$, and does $\theta$ intertwine
$\mathcal{AC}_{\FD^{(a,b)}}$ with $\mathcal{AC}_{\FD^{(b,a)}}$?
\end{question}

\begin{question}~\label{q04palindrome}
What is the palindromic complexity function $P_{\FD^{(a,b)}}(n)$, how does
it relate to the known palindromicity of $\Phi(\FD^{(a,b)}_n)$, and is it
$\theta$-invariant?
\end{question}

%========== SECTION 3 =================================== 
%========================================================
\section{The S-adic Continued Fraction Model}~\label{sec01sadic}

The recurrence~\eqref{eqq01biperiodicrecurrence} defining $\FD^{(a,b)}$ has
exactly the shape of the classical \emph{singular sequence}
construction used to build Sturmian words from continued fractions. 
For an integer $d\ge1$, let $\sigma_d\colon\AlphA ^{*}\to\AlphA ^{*}$ be the
\emph{elementary morphism}
\[
  \sigma_d(0) = 0^d1, \qquad \sigma_d(1) = 0.
\]
Hence, each $\sigma_d$ is non-erasing and injective on $\AlphA ^{\N}$, and it extends continuously to a map $\AlphA ^{\N}\to\AlphA ^{\N}$: if $u_n\to u$
in the prefix topology, then $\sigma_d(u_n)\to\sigma_d(u)$.

\begin{definition}~\label{def01standardsequence}
Given a sequence of positive integers $(d_k)_{k\ge1}$, the associated \emph{standard sequence} $(s_k)_{k\ge-1}$ is
defined by
\[
  s_{-1} = 1, \qquad s_0 = 0, \qquad
  s_k = s_{k-1}^{\,d_k}\,s_{k-2} \quad (k\ge1).
\]
\end{definition}

Since $s_{k-2}$ is a prefix of $s_{k-1}$ for every $k$, $s_k$ is a
prefix of $s_{k+1}$ for every $k\ge-1$, and $(s_k)$ converges to a
well-defined infinite word
\[
  s^{(d_1,d_2,d_3,\ldots)} = \lim_{k\to\infty} s_k \in \AlphA ^{\N}.
\]
It is a classical fact  that $s^{(d_1,d_2,\ldots)}$ is a \emph{standard Sturmian word} for
\emph{every} choice of the directive sequence $(d_k)_{k\ge1}$, and that
every standard Sturmian word arises this way; the slope of
$s^{(d_1,d_2,\ldots)}$ is the irrational number with continued fraction
expansion built from $(d_k)$. In particular periodicity of $(d_k)$ is
not required for $s^{(d_1,d_2,\ldots)}$ to be Sturmian: it is Sturmian
for \emph{any} sequence of positive integers, and merely happens to have
\emph{quadratic irrational} slope precisely when $(d_k)$ is eventually
periodic.

A second classical fact we will use is the \emph{desubstitution
identity}: for every directive sequence,
\begin{equation}  \label{eq01desubgeneral}
  s^{(d_1,d_2,d_3,\ldots)} = \sigma_{d_1}\bigl(s^{(d_2,d_3,\ldots)}\bigr).
\end{equation}

\begin{proposition}~\label{prop01fabisstandard}
For every $a,b\geqslant 1$, the biperiodic Fibonacci word $f^{(a,b)}$
coincides with the standard sequence $s^{(d_1,d_2,d_3,\ldots)}$ with
directive sequence
\[
  (d_1,d_2,d_3,d_4,\ldots) = (a,b,a,b,\ldots), \qquad
  d_k = \begin{cases} a & k \text{ odd},\\ b & k \text{ even}.\end{cases}
\]
\end{proposition}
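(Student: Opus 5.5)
The plan is a direct index comparison between the recurrence~\eqref{eqq01biperiodicrecurrence} and Definition~\ref{def01standardsequence}, followed by passage to the limit. The candidate identification is a shift by one:
\[
  s_k = \FD^{(a,b)}_{k+1} \qquad (k\geqslant -1),
\]
where $(s_k)$ is the standard sequence for the directive sequence $(a,b,a,b,\ldots)$.

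First I check the seeds. We have $s_{-1}=1=\FD^{(a,b)}_0$ and $s_0=0=\FD^{(a,b)}_1$.

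Next I do the inductive step by strong induction on $k\geqslant1$. Assume $s_{k-1}=\FD^{(a,b)}_k$ and $s_{k-2}=\FD^{(a,b)}_{k-1}$. Then
\[
  s_k = s_{k-1}^{\,d_k}s_{k-2} = \bigl(\FD^{(a,b)}_k\bigr)^{d_k}\FD^{(a,b)}_{k-1}.
\]
By~\eqref{eqq01biperiodicrecurrence} with $n=k+1$,
\[
  \FD^{(a,b)}_{k+1} = \bigl(\FD^{(a,b)}_k\bigr)^{e}\FD^{(a,b)}_{k-1},
\]
where $e=a$ when $k+1$ is even (that is, $k$ is odd) and $e=b$ when $k$ is even. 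This is exactly $d_k$ as prescribed in the statement, so $s_k=\FD^{(a,b)}_{k+1}$.

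Finally I pass to the limit. Both sequences are prefix chains, at least from index $k\geqslant0$ for $(s_k)$ and $n\geqslant1$ for $\FD^{(a,b)}_n$. Note that $\FD^{(a,b)}_0=1$ is \emph{not} a prefix of $\FD^{(a,b)}_1=0$, so the chain must be started at $n=1$. One checks directly that $\FD^{(a,b)}_1=0$ is a prefix of $\FD^{(a,b)}_2=0^a1$. Then the recurrence shows inductively that $\FD^{(a,b)}_{n-1}$ is a prefix of $\FD^{(a,b)}_n$: the word $\FD^{(a,b)}_n$ begins with $\FD^{(a,b)}_{n-1}$ because the exponent is at least $1$. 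The lengths also strictly increase from $n=2$ on, so the limits exist. Since the two sequences agree termwise after the index shift, the limits coincide: $\FD^{(a,b)}=s^{(a,b,a,b,\ldots)}$.

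The only real obstacle is bookkeeping. Three points need care: getting the parity convention right (the shift by one swaps "even $n$" with "odd $k$"), noticing that the seed $\FD^{(a,b)}_0$ does not belong to the prefix chain, and recording that the statement's $f^{(a,b)}$ denotes $\FD^{(a,b)}$. If the parity had come out reversed, the conclusion would instead be the directive sequence $(b,a,b,a,\ldots)$. So I will verify the case $k=1$ explicitly: $s_1=0^{a}1=\FD^{(a,b)}_2$, which uses exponent $a$ at even $n=2$, consistent with $d_1=a$.
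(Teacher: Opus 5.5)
Your proof is correct and follows the paper's argument: the index shift $s_k=\FD^{(a,b)}_{k+1}$, matching seeds, induction using the parity check that even $n=k+1$ corresponds to $d_k=a$ for odd $k$, and passage to the limit. Your remark that the prefix chain must start at $n=1$ is a correct refinement of the paper's claim that $\FD^{(a,b)}_n$ is a prefix of $\FD^{(a,b)}_{n+1}$ for every $n$, since $\FD^{(a,b)}_0=1$ is not a prefix of $\FD^{(a,b)}_1=0$.
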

\begin{proof}
According to Definition~\ref{def01standardsequence} and the seeds
$\FD^{(a,b)}_0=1=s_{-1}$, $\FD^{(a,b)}_1=0=s_0$, and the
recurrence~\eqref{eqq01biperiodicrecurrence} reindexed as $k=n-1$,  for
$n\geqslant 2$,  we obtain $\FD^{(a,b)}_n = s_{n-1}$ requires $s_k = s_{k-1}^{d_k}s_{k-2}$
with $d_k=a$ when $n=k+1$ is even. An immediate
induction on $n$ then gives $\FD^{(a,b)}_n=s_{n-1}$ for all $n\geqslant 0$, and
by considering  $n\to\infty$ gives $\FD^{(a,b)}=s^{(a,b,a,b,\ldots)}$.
\end{proof}

Corollary~\ref{cor01sturmian} had established $1$-balanced in the sense of
Definition~\ref{def01balance}, and its slope is a quadratic irrational.

\begin{corollary}~\label{cor01sturmian}
For every $a,b\geqslant 1$, $\FD^{(a,b)}$ is a standard Sturmian word.
\end{corollary}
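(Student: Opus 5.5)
The corollary follows by combining Proposition~\ref{prop01fabisstandard} with the classical fact recalled after Definition~\ref{def01standardsequence}: for every directive sequence of positive integers $(d_k)_{k\ge1}$, the limit $s^{(d_1,d_2,\ldots)}$ of the standard sequence is a standard Sturmian word. First I would invoke Proposition~\ref{prop01fabisstandard} to identify $\FD^{(a,b)}=s^{(a,b,a,b,\ldots)}$. Since $a,b\geqslant 1$, the directive sequence $d_k\in\{a,b\}$ consists of positive integers, so the classical fact applies directly.

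To make the argument less dependent on a black-box citation, I would also sketch why the limit is Sturmian. The first step is to show that $s^{(d_1,d_2,\ldots)}$ is the characteristic word $c_\alpha$ of the irrational slope
\[
\alpha=[0;d_1+1,d_2,d_3,\ldots]
\]
under the convention where $1$ has frequency $\alpha$. This is the standard argument of Lothaire, Chapter~2. One proves by induction on $k$ that $s_k$ is the prefix of $c_\alpha$ of length $q_k$, where $q_k$ is the denominator of the $k$-th convergent. The induction rests on the recurrence $q_k=d_kq_{k-1}+q_{k-2}$, which matches $|s_k|=d_k|s_{k-1}|+|s_{k-2}|$, together with the fact that $s_{k-1}s_{k-2}$ and $s_{k-2}s_{k-1}$ agree except in their last two letters. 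That last fact is exactly where the operation $w^{-}$ of Definition~\ref{def01central} enters. Since characteristic words of irrational slope are Sturmian, and are by definition the standard ones, the claim follows.

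The main obstacle is the bookkeeping in this induction. One has to verify that the near-commutation property (that $s_{k-1}s_{k-2}$ and $s_{k-2}s_{k-1}$ differ only in their last two letters) holds for arbitrary $d_k$, and that the indexing convention (the slope is $[0;d_1+1,\ldots]$ rather than $[0;d_1,\ldots]$) is consistent with the seeds $s_{-1}=1$, $s_0=0$. I would handle both by a single simultaneous induction. As a consistency check, the letter frequency obtained from $|s_k|_1/|s_k|$ through the period-two recurrence should match \eqref{eqq1biperiodicfreqs}.

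Alternatively, one can argue via the desubstitution identity \eqref{eq01desubgeneral}: $\FD^{(a,b)}=\sigma_a\sigma_b\sigma_a\sigma_b\cdots$ is an $S$-adic limit of the morphisms $\sigma_d=E\circ\varphi^{d-1}\ldots$, each of which is a composition of standard Sturmian morphisms. Such morphisms preserve the set of standard Sturmian words, so aperiodicity together with $1$-balance passes to the limit.
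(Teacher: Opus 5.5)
Your main argument (identifying $\FD^{(a,b)}$ with $s^{(a,b,a,b,\ldots)}$ via Proposition~\ref{prop01fabisstandard}, then invoking the classical fact that the standard sequence of any directive sequence of positive integers converges to a standard Sturmian word) is exactly the paper's route, and it is correct. Two side remarks need fixing, though neither affects the proof: your slope $[0;a+1,\overline{b,a}]$ equals $b/(\alpha+b)$ with $\alpha=(ab+\sqrt{a^2b^2+4ab})/2$, which matches Theorem~\ref{thm02freqduality} but not \eqref{eqq1biperiodicfreqs} when $b\neq 1$ (the paper itself notes \eqref{eqq1biperiodicfreqs} is valid only for $b=1$), so your proposed consistency check would fail; and in the alternative route the correct factorisation is $\sigma_d=(\sigma_1\circ E)^{d-1}\circ\sigma_1$, with $E$ exchanging the letters, rather than $E\circ\varphi^{d-1}$.
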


This has a direct
consequence for Question~\ref{q02balance}: since every aperiodic
Sturmian word is exactly $1$-balanced and has abelian complexity
identically $2$~\cite{RichommeSaariZamboni2010}, both $B(n)\equiv1$ and
$\mathcal{AC}_{\FD^{(a,b)}}(n)\equiv2$ hold for all $a,b\geqslant 1$, and
duality under $a\leftrightarrow b$ for these two invariants is trivial.

Definition~\ref{def01standardsequence} attaches to any directive
sequence $(d_k)$ the continued fraction $$[d_1,d_2,d_3,\ldots] :=
d_1+\cfrac{1}{d_2+\cfrac{1}{d_3+\cdots}}.$$
For the alternating sequence
of Proposition~\ref{prop01fabisstandard} continued fraction is
purely periodic with period $(a,b)$, and we denote its value by $\chi(a,b) := [a,b,a,b,\ldots].$

\begin{proposition}~\label{prop01AlphAacf}
For all $a,b\geqslant 1$, $\AlphA a(a,b) = b\cdot\chi(a,b).$
\end{proposition}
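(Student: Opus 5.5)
The plan is to identify $\chi(a,b)$ as the positive root of an explicit quadratic, using the self-similarity of the purely periodic continued fraction $[a,b,a,b,\ldots]$. Then we rescale that quadratic by $b$ and compare with the closed form~\eqref{eqq1AlphAadef}.

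First, since all partial quotients are positive integers, the infinite continued fraction $[a,b,a,b,\ldots]$ converges to an irrational number. Its value satisfies $\chi(a,b)>a\geqslant 1$, because the tail $1/(b+\cdots)$ lies strictly between $0$ and $1$. Purely periodic with period $(a,b)$ means the tail after two partial quotients is again $\chi$, so
\[
  \chi = a + \cfrac{1}{b+\cfrac{1}{\chi}} = a + \frac{\chi}{b\chi+1}.
\]
Clearing the denominator $b\chi+1>0$ gives $\chi(b\chi+1) = a(b\chi+1)+\chi$. This simplifies to
\[
  b\chi^2 - ab\,\chi - a = 0 .
\]

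Next, multiply this equation by $b$ and set $y=b\chi$. We obtain $y^2 - ab\,y - ab = 0$, whose roots are $\bigl(ab\pm\sqrt{a^2b^2+4ab}\bigr)/2$. The product of the roots is $-ab<0$, so exactly one root is positive. Since $y=b\chi>0$, we must take the plus sign. This gives $b\chi(a,b) = \bigl(\sqrt{a^2b^2+4ab}+ab\bigr)/2 = \AlphA a(a,b)$ by~\eqref{eqq1AlphAadef}.

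The only step requiring care is the justification of the fixed-point equation, which rests on two facts. First, the limit of the convergents exists. Second, it equals $a+1/(b+1/\chi)$, which holds by continuity of the map $x\mapsto a+1/(b+1/x)$ on $x>0$. Beyond that, one must select the correct (positive) root, which the sign argument above settles. The rest is routine algebra.
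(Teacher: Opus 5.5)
Your proof is correct and follows the same route as the paper: use the periodicity $\chi=a+1/(b+1/\chi)$ to get $b\chi^2-ab\chi-a=0$, then take the positive root. The only differences are cosmetic. You substitute $y=b\chi$ before solving and justify the fixed-point equation and the root choice explicitly, whereas the paper solves for $\chi$ first and then multiplies by $b$.
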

\begin{proof}
Assume that $x=\chi(a,b)$, the periodicity $[a,b,a,b,\ldots] =
a+1/[b,a,b,a,\ldots]$ together with $[b,a,b,a,\ldots] = b+1/x$ satisfies
\[
  x = a+\frac{1}{\,b+\frac1x\,} = a + \frac{x}{bx+1} = \frac{abx+a+x}{bx+1}.
\]
Since $x(bx+1) = abx+a+x$, where $bx^2-abx-a=0,$
whose positive root is $x = (ab+\sqrt{a^2b^2+4ab})/2b.$ Thus, by multiplying through by $b$ gives $bx =(ab+\sqrt{a^2b^2+4ab})/2,$ 
which is exactly $\AlphA a(a,b)$ as given in~\eqref{eqq1AlphAadef}.
\end{proof}

Proposition~\ref{prop01AlphAacf} identifies $\AlphA a(a,b)$, and the
letter frequencies~\eqref{eqq1biperiodicfreqs}, as a value of the
periodic continued fraction attached to $\FD^{(a,b)}$ by considering
Proposition~\ref{prop01fabisstandard}. 
Assume that $x=a+1/y$ with $y=b+1/x$, is precisely a statement about the
two shifts of one periodic continued fraction, and it is this
shift relation  at the level of directive sequences, by considering the
duality between $\FD^{(a,b)}$ and $\FD^{(b,a)}$. Thus, $\FD^{(a,b)}$ has directive
sequence $(a,b,a,b,\ldots)$ and $\FD^{(b,a)}$ has directive sequence
$(b,a,b,a,\ldots)$ where $\FD^{(b,a)}$ is exactly $\FD^{(b,a)}$ with its leading
term omitted and emphasized that $\FD^{(a,b)} = \sigma_a\bigl(\FD^{(b,a)}\bigr)$. 

\begin{theorem}~\label{thm01desub}
For all $a,b\geqslant 1$ and all $n\geqslant 0$,
\[
  \sigma_a\bigl(\FD^{(b,a)}_n\bigr) = \FD^{(a,b)}_{n+1},
\]
if and only if  $\sigma_a\bigl(\FD^{(b,a)}\bigr) = \FD^{(a,b)}$ for large $n$.
\end{theorem}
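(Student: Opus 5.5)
The plan is to prove the finite identity $\sigma_a\bigl(\FD^{(b,a)}_n\bigr)=\FD^{(a,b)}_{n+1}$ for every $n\geqslant 0$ by strong induction on $n$, using only that $\sigma_a$ is a monoid morphism. The infinite identity $\sigma_a\bigl(\FD^{(b,a)}\bigr)=\FD^{(a,b)}$ then follows by passing to the limit with the continuity of $\sigma_a$ noted in Section~\ref{sec01sadic}. Since both statements will then be established unconditionally, the ``if and only if'' holds trivially. I will also record how the infinite identity is recovered independently, via Proposition~\ref{prop01fabisstandard} together with the desubstitution identity~\eqref{eq01desubgeneral}.

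\emph{Base cases.} For $n=0$ we have $\sigma_a(\FD^{(b,a)}_0)=\sigma_a(1)=0=\FD^{(a,b)}_1$. For $n=1$ we have $\sigma_a(\FD^{(b,a)}_1)=\sigma_a(0)=0^a1$. By~\eqref{eqq01biperiodicrecurrence} with index $2$ (even, exponent $a$), $\FD^{(a,b)}_2=(\FD^{(a,b)}_1)^a\FD^{(a,b)}_0=0^a1$, so the two sides agree.

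\emph{Inductive step.} Let $n\geqslant 2$. The point to track carefully is the parity bookkeeping, which I expect to be the only real obstacle. In the recurrence for $\FD^{(b,a)}$ the roles of the parameters are swapped, so the exponent at level $n$ is $b$ when $n$ is even and $a$ when $n$ is odd. In the recurrence for $\FD^{(a,b)}$ at level $n+1$, the exponent is $b$ when $n+1$ is odd (that is, $n$ even) and $a$ when $n+1$ is even (that is, $n$ odd). Hence in both cases the two recurrences use the same exponent; call it $e\in\{a,b\}$. Applying the morphism $\sigma_a$ and then the induction hypothesis gives
\[
\sigma_a\bigl(\FD^{(b,a)}_n\bigr)=\sigma_a\bigl(\FD^{(b,a)}_{n-1}\bigr)^{e}\,\sigma_a\bigl(\FD^{(b,a)}_{n-2}\bigr)=\bigl(\FD^{(a,b)}_{n}\bigr)^{e}\,\FD^{(a,b)}_{n-1}=\FD^{(a,b)}_{n+1},
\]
which closes the induction.

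\emph{Passage to the limit.} Each $\FD^{(b,a)}_n$ is a prefix of $\FD^{(b,a)}_{n+1}$, and $\FD^{(b,a)}_n\to\FD^{(b,a)}$ in the prefix topology. Since $\sigma_a$ is non-erasing and continuous, $\sigma_a(\FD^{(b,a)}_n)\to\sigma_a(\FD^{(b,a)})$. By the finite identity the left-hand side equals $\FD^{(a,b)}_{n+1}$, which converges to $\FD^{(a,b)}$. Thus $\sigma_a(\FD^{(b,a)})=\FD^{(a,b)}$.

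For the reverse direction as literally phrased, I would observe that the infinite identity is also an immediate consequence of Proposition~\ref{prop01fabisstandard} combined with~\eqref{eq01desubgeneral}: the directive sequence of $\FD^{(a,b)}$ is $(a,b,a,\dots)$, and deleting its first term leaves $(b,a,b,\dots)$, which is the directive sequence of $\FD^{(b,a)}$. Both sides of the equivalence therefore hold, which proves the theorem.
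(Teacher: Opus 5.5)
Your proof is correct and is essentially the paper's proof. It uses the same base cases $n=0,1$, the same strong induction matching the exponent of the $\FD^{(b,a)}$ recurrence at level $n$ with that of the $\FD^{(a,b)}$ recurrence at level $n+1$, and the same limit argument via continuity of $\sigma_a$, with the ``if and only if'' holding simply because both sides are established outright. One harmless slip, shared with the paper: $\FD^{(b,a)}_n$ is a prefix of $\FD^{(b,a)}_{n+1}$ only for $n\geqslant 1$ (indeed $\FD^{(b,a)}_0=1$ is not a prefix of $\FD^{(b,a)}_1=0$), but that is all the limit argument needs.
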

\begin{proof}
By using induction mathematics, for $n=0,1$, we have $\FD^{(b,a)}_0=1$ and $\sigma_a(1)=0=\FD^{(a,b)}_1$. Then, $\FD^{(b,a)}_1=0$ and $\sigma_a(0)=0^a1=\FD^{(a,b)}_1{}^{a}\FD^{(a,b)}_0$. Thus, for $n=2$, $\FD^{(a,b)}_2=\bigl(\FD^{(a,b)}_1\bigr)^a \FD^{(a,b)}_0$, and $\FD^{(a,b)}_2=0^a1=\sigma_a\bigl(\FD^{(b,a)}_1\bigr)$.

Now, let $n\geqslant 2$ and suppose that $\sigma_a(\FD^{(b,a)}_{n-1})=\FD^{(a,b)}_n$ and $\sigma_a(\FD^{(b,a)}_{n-2})=\FD^{(a,b)}_{n-1}$. Since $\FD^{(b,a)}$ is the word obtained from~\eqref{eqq01biperiodicrecurrence} with the terms of $a$ and $b$ exchanged as
\[
  \FD^{(b,a)}_n =
  \begin{cases}
    \bigl(\FD^{(b,a)}_{n-1}\bigr)^{b}\, \FD^{(b,a)}_{n-2} & n \text{ even},\\[2pt]
    \bigl(\FD^{(b,a)}_{n-1}\bigr)^{a}\, \FD^{(b,a)}_{n-2} & n \text{ odd}.
  \end{cases}
\]
Thus, according to the morphism $\sigma_a$  and the inductive hypothesis, it follows that 
\[
  \sigma_a\bigl(\FD^{(b,a)}_n\bigr) =
  \begin{cases}
    \bigl(\FD^{(a,b)}_{n}\bigr)^{b}\, \FD^{(a,b)}_{n-1} & n \text{ even},\\[2pt]
    \bigl(\FD^{(a,b)}_{n}\bigr)^{a}\, \FD^{(a,b)}_{n-1} & n \text{ odd}.
  \end{cases}
\]
On the other hand, by applying~\eqref{eqq01biperiodicrecurrence} to
$\FD^{(a,b)}_{n+1}$ satisfies
\begin{enumerate}
    \item If $n$ is even then $n+1$ is odd, so $\FD^{(a,b)}_{n+1}=\bigl(\FD^{(a,b)}_n\bigr)^b \FD^{(a,b)}_{n-1}$;
    \item If $n$ is odd then $n+1$ is even, so $\FD^{(a,b)}_{n+1}=\bigl(\FD^{(a,b)}_n\bigr)^a
\FD^{(a,b)}_{n-1}$.
\end{enumerate}
 These are exactly the two cases above, so $\sigma_a(\FD^{(b,a)}_n)=\FD^{(a,b)}_{n+1}$, completing the induction.

Since $\sigma_a$ is non-erasing, it is continuous with respect to the prefix topology on $\AlphA ^{\N}$. Then, if $u_n\to u$ then $\sigma_a(u_n)\to\sigma_a(u)$. As $\FD^{(b,a)}_n\to
\FD^{(b,a)}$, we conclude $\sigma_a(\FD^{(b,a)})=\lim_n
\sigma_a(\FD^{(b,a)}_n)=\lim_n \FD^{(a,b)}_{n+1}=\FD^{(a,b)}$.
\end{proof}

Conjecture~\ref{conj01dualityinformal} holds in the following sharp,
correction-free form: the map $\theta=\sigma_a$ satisfies $\FD^{(b,a)} \ \longmapsto\ \sigma_a\bigl(\FD^{(b,a)}\bigr)=\FD^{(a,b)}$
exactly, with no letter relabelling and no bounded prefix correction
needed. Symmetrically, $\FD^{(a,b)}$ determines $\FD^{(b,a)}$ as the unique
infinite word $w$ with $\sigma_a(w)=\FD^{(a,b)}$.

%==========================comlete steps ==================
%==========================================================
\section{Frequency Duality and Critical Exponents}~\label{sec04ceduality}

This section explores the duality induced by the substitution $\sigma_a$ (resp.\ $\sigma_b$) between the two families of infinite words $\FD^{(a,b)}$ and $\FD^{(b,a)}$. In particular, we apply Theorem~\ref{thm01desub} to relate the letter frequencies and critical exponents of these words, thereby establishing a precise correspondence between their combinatorial and statistical properties.

\begin{lemma}~\label{lem01length}
For every finite word $x\in\AlphA^{*}$ and every $d\geqslant 1$, $|\sigma_d(x)| = d\,|x|_0 + |x|.$
\end{lemma}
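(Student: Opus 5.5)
The argument is a direct count, using only that $\sigma_d$ is a monoid morphism. Length is additive under concatenation. Since $\sigma_d(xy)=\sigma_d(x)\sigma_d(y)$, the map $x\mapsto|\sigma_d(x)|$ is a monoid homomorphism from $\AlphA^{*}$ to $(\N,+)$. It is therefore determined by its values on the two letters:
\[
|\sigma_d(0)|=|0^d1|=d+1,\qquad |\sigma_d(1)|=|0|=1 .
\]

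Writing $x=x_0x_1\cdots x_{k-1}$, we get $|\sigma_d(x)|=\sum_{i}|\sigma_d(x_i)|$. Grouping the terms by letter gives
\[
|\sigma_d(x)| = (d+1)|x|_0 + |x|_1 .
\]
Using $|x|=|x|_0+|x|_1$ from the introduction, this becomes $d|x|_0+|x|$, which is the claimed identity.

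To be fully formal, I would instead do induction on $|x|$. For the base case $x=\varepsilon$, we have $\sigma_d(\varepsilon)=\varepsilon$ and both sides equal $0$. For the inductive step, take $x=yc$ with $c\in\AlphA$. The case $c=0$ adds $d+1$ to the left side, and adds $d\cdot1+1$ to the right side. The case $c=1$ adds $1$ to both sides.

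There is no real obstacle here. The only points to state explicitly are that $\sigma_d(\varepsilon)=\varepsilon$ and that $d\geqslant1$ is only needed so that $\sigma_d$ is as defined in Section~\ref{sec01sadic}. The identity itself would hold formally for $d=0$ as well.

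I would also note the companion count $|\sigma_d(x)|_1=|x|_0$ and $|\sigma_d(x)|_0=d|x|_0+|x|_1$. These follow from the same letter-by-letter tally and are presumably what the frequency duality in Section~\ref{sec04ceduality} will use next.
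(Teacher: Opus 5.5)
Your proposal is correct and follows the paper's proof: both use the letter lengths $|\sigma_d(0)|=d+1$ and $|\sigma_d(1)|=1$ together with the morphism property to get $(d+1)|x|_0+|x|_1=d|x|_0+|x|$. Your inductive version and the companion letter counts $|\sigma_d(x)|_1=|x|_0$, $|\sigma_d(x)|_0=d|x|_0+|x|_1$ are also correct; the latter are what the proof of Theorem~\ref{thm02freqduality} uses.
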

\begin{proof}
Let $x$ be a finite word in $\AlphA^{*}$ and let $d\geqslant 1$. Since $\sigma_d(0)=0^d1$ has length $d+1$ and $\sigma_d(1)=0$ has length $1$. Then, since $\sigma_d$ is a morphism, $|\sigma_d(x)| = (d+1)|x|_0 + 1\cdot|x|_1 =
d|x|_0 + (|x|_0+|x|_1) = d|x|_0+|x|$.
\end{proof}

\begin{lemma}~\label{lem01primitivity}
If $u\in\AlphA^{*}$ is primitive,
then $\sigma_d(u)$ is primitive for every $d\geqslant 1$.
\end{lemma}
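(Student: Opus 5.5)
We argue by contraposition: assuming $\sigma_d(u)$ is not primitive, we will show that $u$ is not primitive. So suppose $\sigma_d(u)=v^k$ with $k\geqslant 2$ and $v$ primitive. The empty word and single letters are trivial ($\sigma_d(1)=0$ and $\sigma_d(0)=0^d1$ are primitive), so assume $|u|\geqslant 2$.

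The key tool is the \emph{unique parsing} (synchronization) property of $\sigma_d$. Every image $\sigma_d(x)$ decomposes uniquely into blocks $0^d1$ and $0$. Reading left to right, each occurrence of $1$ closes a block $0^d1$, and the remaining $0$'s are blocks $\sigma_d(1)$. More precisely, a factor $0^j1$ with $j\geqslant d$ is parsed as $(j-d)$ copies of $\sigma_d(1)$ followed by $\sigma_d(0)$. Hence $\sigma_d$ is injective on $\AlphA^{*}$.

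The plan is to show that $v$ is itself an image $\sigma_d(y)$, and that the cut points between consecutive copies of $v$ in $v^k$ are block boundaries of the parsing of $\sigma_d(u)$.

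1. \textbf{$v$ ends in $1$ or $u$ ends in $1$.} If $u$ contains a $0$, then $\sigma_d(u)$ contains a $1$, so $v$ contains a $1$. If $u=1^m$, then $u$ is primitive only when $m=1$, and $\sigma_d(1)=0$ is primitive. In the remaining case $u$ ends in $1$ and $\sigma_d(u)$ ends in $0$.

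2. \textbf{Cut points are block boundaries.} Block boundaries in the parsing are determined locally: a position is a boundary iff it is immediately after a $1$, or it lies inside a run of $0$'s preceding a $1$ at distance at least $d$, or it lies in a final run of $0$'s. This description is invariant under shifting by $|v|$ within the periodic word $v^k$, except possibly near the ends. Since $v^k$ begins at a boundary (position $0$) and ends at a boundary, periodicity then forces each position $i|v|$ to be a boundary. This step needs care with the trailing $0$-run, and it is where I expect the main obstacle.

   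The cleanest route is a direct comparison using runs. Write $\sigma_d(u)=0^{j_1}1\,0^{j_2}1\cdots 0^{j_r}1\,0^{t}$, where each $j_i\geqslant d$ and $t$ is the number of trailing $1$'s of $u$. Then $u$ is recovered from the sequence $(j_i-d)_i$ and $t$: the $i$-th $0$ of $u$ is preceded by $j_i-d$ ones since the previous $0$. Thus $u=1^{j_1-d}0\,1^{j_2-d}0\cdots 1^{j_r-d}0\,1^{t}$.

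3. \textbf{Transfer periodicity.} A factorization $\sigma_d(u)=v^k$ makes the run-sequence $(j_1,\dots,j_r;t)$ periodic in the appropriate sense. If $t=0$, $v$ ends in $1$ and $v=0^{j_1}1\cdots0^{j_s}1$ with $r=ks$ and $(j_i)$ $s$-periodic. Then $u=(1^{j_1-d}0\cdots1^{j_s-d}0)^k$, which is not primitive.

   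If $t>0$, the last letter of $v^k$ is $0$, so $v$ ends in $0$. Then the first letter of the next copy of $v$ is also preceded by $0$s, and the combinatorics of the runs across the junction must be handled. Comparing the first run $0^{j_1}$ with the run straddling each junction gives $j_1=j_{s}'+t$-type equalities. Writing $v=0^{t}w$ (via conjugation of the periodic word) reduces this case to the previous one.

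Finally, injectivity of $\sigma_d$ closes the argument: $\sigma_d(u)=\sigma_d(y)^k=\sigma_d(y^k)$ gives $u=y^k$ with $k\geqslant2$, contradicting the primitivity of $u$.
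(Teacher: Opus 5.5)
Your proof is correct in substance, and it takes a genuinely different route from the paper. The paper gives no argument of its own; it only cites recognizability of the non-erasing, injective morphism $\sigma_d$ (Lothaire, Ch.~2). You instead prove the statement directly from the block structure of $\sigma_d$. This gains something real: injectivity and non-erasure alone do not preserve primitivity. For instance, the injective, non-erasing (even primitive) substitution $0\mapsto 011$, $1\mapsto 00110$ sends the primitive word $01$ to $(0110)^2$. So the lemma holds because of the specific shape of $\sigma_d$, and your run decomposition $u=1^{j_1-d}0\cdots 1^{j_r-d}0\,1^{t}$ is exactly where that shape enters. Your case $t=0$ is complete and correct.

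The case $t>0$ needs to be written out properly. The phrase ``writing $v=0^{t}w$'' is literally false: $v$ ends in $0^{t}$ but need not begin with it (take $d=1$, $u=(0111)^2$, so $v=01000$). The repair is as follows:
\begin{itemize}
\item Write $u=u'1^{t}$ and $v=w0^{t}$, with $w$ ending in $1$.
\item Then $\sigma_d(u')=(w0^{t})^{k-1}w$, hence $\sigma_d(1^{t}u')=0^{t}\sigma_d(u')=(0^{t}w)^{k}$.
\item Your $t=0$ case, applied to $1^{t}u'$, shows that $1^{t}u'$ is not primitive.
\item Since $1^{t}u'$ is a conjugate of $u$, and conjugation preserves primitivity, $u$ is not primitive either.
\end{itemize}
You should state both of the facts used here explicitly.

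Alternatively, you can drop Step~2 and the case split altogether, and make your closing paragraph the whole proof. The key observation is that $\sigma_d(\AlphA^{*})$ is exactly the set of words in which every $1$ is immediately preceded by $0^{d}$. In $v^{k}=\sigma_d(u)$, each $1$ of the first copy of $v$ has its $0^{d}$ inside that copy. Hence $v=\sigma_d(y)$ for some $y$, and injectivity gives $u=y^{k}$. As currently written, that closing paragraph uses a $y$ that you never construct.
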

\begin{proof}
This is a standard consequence of the recognizability of primitive
substitutions applied to the
non-erasing, injective morphism $\sigma_d$ (see
Lothaire~\cite{Lothaire2002}, Ch.~2). 
\end{proof}

\begin{theorem}~\label{thm02freqduality}
For all $a,b\geqslant 1$, the letter frequencies of $f^{(a,b)}$ are
\begin{equation}~\label{eq01freqcorrected}
  \operatorname{freq}_1\bigl(\FD^{(a,b)}\bigr)=\frac{b}{\alpha+b},
  \qquad
  \operatorname{freq}_0\bigl(\FD^{(a,b)}\bigr)=\frac{\alpha}{\alpha+b}.
\end{equation}
\end{theorem}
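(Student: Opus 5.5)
The plan is to compute the letter counts of the prefixes $\FD^{(a,b)}_n$ by means of the desubstitution identity of Theorem~\ref{thm01desub}. I will then identify the limiting ratio with the periodic continued fraction $\chi(a,b)$, and convert the result to the stated form with Proposition~\ref{prop01AlphAacf}. Throughout, $\alpha$ in \eqref{eq01freqcorrected} denotes $\AlphA a(a,b)$.

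\textbf{Step 1 (letter counts under $\sigma_d$).} Since $\sigma_d(0)=0^d1$ and $\sigma_d(1)=0$, every finite $x$ satisfies
\[
|\sigma_d(x)|_0 = d\,|x|_0+|x|_1, \qquad |\sigma_d(x)|_1=|x|_0 .
\]
This refines Lemma~\ref{lem01length}. Set $r_n^{(a,b)}=|\FD^{(a,b)}_n|_0/|\FD^{(a,b)}_n|_1$ for $n\ge 2$; there both letters occur. Theorem~\ref{thm01desub} gives $\FD^{(a,b)}_{n+1}=\sigma_a(\FD^{(b,a)}_n)$, so
\[
r_{n+1}^{(a,b)} = a+\frac{1}{r_n^{(b,a)}} .
\]
The base case is $\FD^{(a,b)}_2=0^a1$, which gives $r_2^{(a,b)}=a$; likewise $r_2^{(b,a)}=b$.

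\textbf{Step 2 (convergence).} Unwinding the recursion while alternating the roles of $a$ and $b$ shows that $r_n^{(a,b)}$ is the finite continued fraction $[a,b,a,\dots]$ with $n-1$ partial quotients. That is, it is a convergent of $\chi(a,b)=[a,b,a,b,\dots]$. Convergents of an infinite simple continued fraction converge to its value, so $r_n^{(a,b)}\to\chi(a,b)$. Hence, along the prefixes $\FD^{(a,b)}_n$, the proportion of $1$'s tends to
\[
\frac{1}{1+\chi(a,b)} .
\]
Proposition~\ref{prop01AlphAacf} gives $\chi(a,b)=\alpha/b$, so this limit equals
\[
\frac{1}{1+\alpha/b}=\frac{b}{\alpha+b},
\]
and the proportion of $0$'s tends to $\alpha/(\alpha+b)$. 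As a sanity check, $a=b=1$ gives $\alpha=\fphi$ and the classical value $1/\fphi^2$.

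\textbf{Step 3 (frequency of the infinite word).} The limit so far is taken only along the sparse sequence of prefix lengths $|\FD^{(a,b)}_n|$. To conclude that $\operatorname{freq}_1(\FD^{(a,b)})$ exists and equals this value, I will use Corollary~\ref{cor01sturmian}. Since $\FD^{(a,b)}$ is Sturmian, it is $1$-balanced, so any two factors of length $m$ differ in their number of $1$'s by at most $1$. A standard argument then shows that $|u|_1/|u|$ converges uniformly over factors $u$ as $|u|\to\infty$; in particular the full limit over all prefixes exists. It therefore coincides with the subsequential limit computed in Step 2.

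This last passage, from the subsequence of standard prefixes to a genuine frequency, is the only point that needs care. I would handle it with the balance property as just described, rather than by trying to control arbitrary prefixes directly. The arithmetic in Steps 1 and 2 is routine.
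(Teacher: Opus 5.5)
Your proof is correct. It starts from the same identity as the paper, namely $\FD^{(a,b)}_{n+1}=\sigma_a(\FD^{(b,a)}_n)$ together with $|\sigma_a(x)|_1=|x|_0$ and $|\sigma_a(x)|_0=a|x|_0+|x|_1$, but you use that identity differently.

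The paper passes to the limit at once. It \emph{assumes} the frequencies $P,Q$ of $\FD^{(b,a)}$ exist and derives $\operatorname{freq}_1(\FD^{(a,b)})=P/((a+1)P+Q)$. It then inserts the claimed formula for $(b,a)$ as an ansatz and checks consistency through $\alpha^2=ab\,\alpha+ab$. Finally it asserts, without proof, that this relation and its $\sigma_b$-counterpart have a unique solution.

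You instead keep the exact finite-level recursion $r^{(a,b)}_{n+1}=a+1/r^{(b,a)}_n$ and unroll it, so the ratios are literally the convergents of $\chi(a,b)$. This proves convergence instead of presupposing it, and it produces the limit without guessing it. The paper's limiting relation is just the fixed-point form $r=a+1/(b+1/r)$ of your recursion. The uniqueness it invokes amounts to $br^2-abr-a=0$ having a single positive root, as in Proposition~\ref{prop01AlphAacf}.

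Your Step~3 handles a point the paper passes over: its argument only controls ratios along the standard prefixes $\FD^{(a,b)}_n$. The balance argument is what turns that subsequential limit into a genuine frequency of the infinite word. Concretely, let $h(m)$ and $g(m)$ be the maximal and minimal numbers of $1$'s in factors of length $m$. Then $h$ is subadditive, $g$ is superadditive and $h-g\le 1$, so Fekete's lemma gives a common limit, with error at most $1/m$ uniformly over factors.

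The paper's route is shorter and purely algebraic. Yours is slightly longer but closes the existence and uniqueness gaps that the fixed-point verification leaves open.
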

\begin{proof}
Assume that  $x=\FD^{(b,a)}_{n-1}$. According to Theorem~\ref{thm01desub},
$\FD^{(a,b)}_n=\sigma_a(x)$. Since $\sigma_a(0)=0^a1$ contributes one
letter $1$ and $\sigma_a(1)=0$ contributes none, 
\[
  |\FD^{(a,b)}_n|_1=|x|_0, \qquad
  |\FD^{(a,b)}_n|_0=a|x|_0+|x|_1,
\]
according to Lemma~\ref{lem01length}, $|\FD^{(a,b)}_n|=a|x|_0+|x|=(a+1)|x|_0+|x|_1$. Let $P=\operatorname{freq}_0(\FD^{(b,a)})$ and
$Q=\operatorname{freq}_1(\FD^{(b,a)})=1-P$ denote the (a priori unknown)
limiting frequencies of $\FD^{(b,a)}$. Then, $|x|=|\FD^{(b,a)}_{n-1}|$ and by considering $n\to\infty$, we have 
\begin{equation}~\label{eq01freqselfconsistency}
  \operatorname{freq}_1\bigl(\FD^{(a,b)}\bigr)=\frac{P}{(a+1)P+Q}.
\end{equation}

Since $P=\alpha/(\alpha+a)$
and $Q=a/(\alpha+a)$, by using $\alpha(b,a)=\alpha(a,b)=\alpha$, substituting that
into~\eqref{eq01freqselfconsistency} gives
\[
  \frac{P}{(a+1)P+Q}
  = \frac{\alpha/(\alpha+a)}{(a+1)\alpha/(\alpha+a) + a/(\alpha+a)}
  = \frac{\alpha}{(a+1)\alpha+a},
\]
which equals the target value $b/(\alpha+b)$ precisely when $ \alpha(\alpha+b) = b\bigl[(a+1)\alpha+a\bigr]$, $\alpha^2+\alpha b = ab\alpha+b\alpha+ab$ and  $\alpha^2 = ab\alpha+ab,$
which is exactly the defining equation for $\alpha$. Hence the ansatz
is self-consistent, and since~\eqref{eq01freqselfconsistency} together
with its $\sigma_b$-counterpart determines the pair of frequencies
uniquely, the
ansatz~\eqref{eq01freqcorrected} is the actual frequency vector.
\end{proof}

For example, if  $(a,b)=(1,2)$, $\alpha\approx2.7321$, and~\eqref{eq01freqcorrected}
gives $\operatorname{freq}_1(\FD^{(1,2)})\approx0.4226$. Direct computation
of $\FD^{(1,2)}_n$ for $n$ up to $9$ (length $265$) gives an empirical
frequency of letter $1$ equal to $112/265\approx0.4226$, matching to
four decimal places. For $(a,b)=(2,1)$, equation~\eqref{eq01freqcorrected}
gives $\operatorname{freq}_1(\FD^{(2,1)})=1/(1+\alpha)\approx0.2679$;
direct computation of $\FD^{(2,1)}_9$ (length $209$) gives $56/209\approx0.2679$.

\medskip

Formula~\eqref{eq01freqcorrected} reduces to the frequencies quoted
in~\eqref{eqq1biperiodicfreqs} exactly when $b=1$; for $b\ne1$ it
differs from~\eqref{eqq1biperiodicfreqs} by the presence of $b$ in
place of $1$ in the denominator. Thus, for $a\ne b$, $\operatorname{freq}_1\bigl(\FD^{(a,b)}\bigr)\ \ne\ \operatorname{freq}_1\bigl(\FD^{(b,a)}\bigr).$

\begin{lemma}~\label{lem02exponenttransform}
Let $u\in\AlphA^{*}$ be primitive, let $u_p$ be a (possibly empty) proper
prefix of $u$, let $j\geqslant 1$, and let $w=u^ju_p$. If $w$ is a factor of
$\FD^{(b,a)}$, then $\sigma_a(w)$ is a factor of $\FD^{(a,b)}$ with exponent
\begin{equation}~\label{eq01exponenttransform}
  e\bigl(\sigma_a(w)\bigr) = j + \frac{a|u_p|_0+|u_p|}{a|u|_0+|u|}.
\end{equation}
In particular, $e(\sigma_a(w)) = j+|u_p|/|u| = e(w)$ if and only if
$|u_p|_0/|u_p| = |u|_0/|u|$ whenever $u_p\ne\varepsilon$.
\end{lemma}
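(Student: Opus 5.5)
The plan is to compute $\sigma_a(w)$ directly from the factorisation $w=u^ju_p$, read off a period of $\sigma_a(w)$, and measure lengths with Lemma~\ref{lem01length}.

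\textbf{Factor property.} Since $w$ is a factor of $\FD^{(b,a)}$, write $\FD^{(b,a)}=xw\mathbf{y}$ with $x$ finite. Theorem~\ref{thm01desub} gives $\FD^{(a,b)}=\sigma_a(\FD^{(b,a)})=\sigma_a(x)\,\sigma_a(w)\,\sigma_a(\mathbf{y})$. Hence $\sigma_a(w)$ is a factor of $\FD^{(a,b)}$.

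\textbf{Shape of the image.} Because $\sigma_a$ is a morphism,
\[
\sigma_a(w)=\sigma_a(u)^j\,\sigma_a(u_p).
\]
Since $u_p$ is a proper prefix of $u$ and $\sigma_a$ is non-erasing, $\sigma_a(u_p)$ is a proper prefix of $\sigma_a(u)$. So $\sigma_a(w)$ has period $p=|\sigma_a(u)|$, and $\sigma_a(u)$ is primitive by Lemma~\ref{lem01primitivity}. Lemma~\ref{lem01length} gives $|\sigma_a(u)|=a|u|_0+|u|$ and $|\sigma_a(u_p)|=a|u_p|_0+|u_p|$. Therefore
\[
\frac{|\sigma_a(w)|}{p}=j+\frac{a|u_p|_0+|u_p|}{a|u|_0+|u|},
\]
which is \eqref{eq01exponenttransform}, provided $p$ is the \emph{smallest} period of $\sigma_a(w)$.

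\textbf{Main obstacle: minimality of $p$.} Suppose $q<p$ were a smaller period of $\sigma_a(w)$. If $j\ge2$, then $|\sigma_a(w)|\ge 2p\ge p+q$. The Fine--Wilf theorem then makes $\gcd(p,q)$ a period of $\sigma_a(w)$, hence of its prefix $\sigma_a(u)$, and since $\gcd(p,q)$ divides $p$ this forces $\sigma_a(u)$ to be a proper power, contradicting Lemma~\ref{lem01primitivity}.

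The case $j=1$ is not covered by this argument. There I would interpret the statement under the same convention that makes $e(w)=j+|u_p|/|u|$ meaningful, namely that $|u|$ is the least period of $w$. I would then try to transfer this minimality through $\sigma_a$. The tool would be recognizability (Lothaire, Ch.~2): a shorter period $q$ of $\sigma_a(w)$ is realigned on the unique $\sigma_a$-desubstitution, producing a period shorter than $|u|$ of $w$. This transfer is the step I expect to require the most care. If it fails, the formula should be read as the exponent with respect to the period $|\sigma_a(u)|$.

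\textbf{The equivalence.} The same convention gives $e(w)=j+|u_p|/|u|$. If $u_p=\varepsilon$, both exponents equal $j$. Otherwise cross-multiply:
\[
|u|\bigl(a|u_p|_0+|u_p|\bigr)=|u_p|\bigl(a|u|_0+|u|\bigr)
\iff a|u|\,|u_p|_0=a|u_p|\,|u|_0 .
\]
Since $a\ge1$, this is exactly $|u_p|_0/|u_p|=|u|_0/|u|$.
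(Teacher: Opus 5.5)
\textbf{Where your proposal stands.} Your treatment of the factor property, the factorisation $\sigma_a(w)=\sigma_a(u)^j\sigma_a(u_p)$, the length count via Lemma~\ref{lem01length} and the final cross-multiplication coincides with the paper's proof. Your Fine--Wilf argument for $j\geqslant 2$ is a correct proof of the step the paper disposes of by citing a ``standard characterisation'' of the least period of $v^jv_p$ with $v$ primitive. For $j\geqslant 2$ you also need no convention for $e(w)$: the same argument applied to $w$ and the primitive word $u$ gives $\pi(w)=|u|$.

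\textbf{The gap at $j=1$.} Your planned repair cannot be completed, because the conclusion is false there even under your extra assumption $\pi(w)=|u|$. Take $u=01$, $u_p=\varepsilon$, $j=1$. Then $w=01$ is a factor of $\FD^{(b,a)}$ (which begins with $0^b1$), $u$ is primitive and $\pi(w)=2=|u|$. Yet $\sigma_a(w)=0^a10$ has least period $a+1<a+2=|\sigma_a(u)|$, so $e(\sigma_a(w))=(a+2)/(a+1)\neq 1$.

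The recognizability transfer breaks down because the border $0$ of $0^a10$ is not aligned with the block decomposition. It is a proper prefix of the block $\sigma_a(0)$ and also the whole block $\sigma_a(1)$, so it does not desubstitute to a border of $w$. When $j\geqslant 2$ the word is long enough ($|\sigma_a(w)|\geqslant p+q$) for Fine--Wilf to apply, and that length condition is exactly what is missing at $j=1$.

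\textbf{Failure without your convention.} The statement also fails with nonempty $u_p$. For $a=b=1$, $w=00100$ is a factor of the Fibonacci word, and $w=u\,u_p$ with $u=0010$ primitive and $u_p=0$. But $\sigma_1(w)=010100101$ has least period $5$, so its exponent is $9/5$ rather than the predicted $9/7$.

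The paper's proof has precisely this defect, since the characterisation it invokes holds only for $j\geqslant 2$. The ``in particular'' clause fails at $j=1$ as well: $w=01$ gives $e(w)=1\neq e(\sigma_a(w))$.

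So your argument proves the lemma exactly in the range $j\geqslant 2$ where it holds. For $j=1$ the only correct reading is your fallback: the right-hand side of \eqref{eq01exponenttransform} equals $|\sigma_a(w)|/|\sigma_a(u)|$, the exponent with respect to the period $|\sigma_a(u)|$, which in general is not $e(\sigma_a(w))$.
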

\begin{proof}
Since $\sigma_a$ is a morphism, $\sigma_a(w)=\sigma_a(u)^j\sigma_a(u_p)$,
and $\sigma_a(u_p)$ is a proper prefix of $\sigma_a(u)$ because $u_p$ is
a proper prefix of $u$ and $\sigma_a$ is non-erasing and
order-preserving. According to Lemma~\ref{lem01primitivity}, $\sigma_a(u)$ is
primitive, so by the standard characterisation of the smallest period
of a word of the form $v^jv_p$ with $v$ primitive and $v_p$ a proper
prefix of $v$, the smallest
period of $\sigma_a(w)$ is $|\sigma_a(u)|$, giving
\[
  e(\sigma_a(w)) = \frac{|\sigma_a(w)|}{|\sigma_a(u)|}
  = j + \frac{|\sigma_a(u_p)|}{|\sigma_a(u)|}
  = j + \frac{a|u_p|_0+|u_p|}{a|u|_0+|u|},
\]
according to Lemma~\ref{lem01length} in the last step. That $\sigma_a(w)$ is a
factor of $\FD^{(a,b)}$ follows from $w$ being a factor of $\FD^{(b,a)}$ and
$\FD^{(a,b)}=\sigma_a(\FD^{(b,a)})$. Thus, if $w$ occurs
at a block-aligned position in $\FD^{(b,a)}$, its image occupies the
corresponding block-aligned window of $\FD^{(a,b)}$.
\end{proof}

\begin{lemma}~\label{prop01celowerbound}
For $a,b \geqslant 1$, 
\[
  \CE\bigl(\FD^{(a,b)}\bigr) \ \geqslant
  \sup\Bigl\{\, j + \frac{a|u_p|_0+|u_p|}{a|u|_0+|u|}  \Bigr\},
\]
where $u^ju_p$ is a factor of $\FD^{(b,a)}$, $u$ primitive and $u_p$ is a proper prefix of $u,\ j\geqslant 1$. 
\end{lemma}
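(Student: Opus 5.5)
The inequality follows directly from Lemma~\ref{lem02exponenttransform} and Theorem~\ref{thm01desub}. We show that every quantity inside the supremum on the right is itself the exponent of some factor of $\FD^{(a,b)}$, and is therefore at most $\CE(\FD^{(a,b)})$. Since the right-hand side is a supremum of such quantities, the bound then follows by taking the supremum.

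\textbf{Step 1 (fix a witness).} Fix a triple $(u,u_p,j)$ admissible in the supremum: $u$ primitive, $u_p$ a proper prefix of $u$, $j\geqslant 1$, and $w=u^ju_p\in\mathcal{L}(\FD^{(b,a)})$.

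\textbf{Step 2 (transfer the factor).} By Theorem~\ref{thm01desub}, $\FD^{(a,b)}=\sigma_a(\FD^{(b,a)})$. Write $\FD^{(b,a)}=pw\,t$ with $p$ finite. Since $\sigma_a$ is a morphism, $\FD^{(a,b)}=\sigma_a(p)\sigma_a(w)\sigma_a(t)$, so $\sigma_a(w)$ is a factor of $\FD^{(a,b)}$. This holds for every occurrence of $w$; no alignment hypothesis is needed.

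\textbf{Step 3 (compute the exponent).} By Lemma~\ref{lem02exponenttransform}, which uses the primitivity of $\sigma_a(u)$ from Lemma~\ref{lem01primitivity},
\[
 e(\sigma_a(w)) = j+\frac{a|u_p|_0+|u_p|}{a|u|_0+|u|}.
\]
Definition~\ref{def01ce} then gives $\CE(\FD^{(a,b)})\geqslant e(\sigma_a(w))$. Taking the supremum over all admissible triples proves the lemma. The bound is to be read in $(1,\infty]$, so an unbounded supremum causes no difficulty.

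\textbf{Main obstacle.} The only delicate point is the identification of the smallest period of $\sigma_a(w)$ with $|\sigma_a(u)|$, which is needed for the equality in Lemma~\ref{lem02exponenttransform}. For the present lemma, however, only an inequality is required, and it holds without that identification. The word $\sigma_a(w)=\sigma_a(u)^j\sigma_a(u_p)$ has period $|\sigma_a(u)|$, because $\sigma_a(u_p)$ is a prefix of $\sigma_a(u)$. Its smallest period is therefore at most $|\sigma_a(u)|$. Hence, by Lemma~\ref{lem01length},
\[
 e(\sigma_a(w))\geqslant \frac{|\sigma_a(w)|}{|\sigma_a(u)|}=j+\frac{a|u_p|_0+|u_p|}{a|u|_0+|u|}.
\]
This inequality is all the argument requires, so the plan is to use this elementary estimate and make the proof independent of the recognizability argument.
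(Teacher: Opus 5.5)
Your proof is correct and follows the paper's route: you transfer $w=u^ju_p$ to the factor $\sigma_a(w)$ of $\FD^{(a,b)}=\sigma_a(\FD^{(b,a)})$, bound its exponent by $j+\frac{a|u_p|_0+|u_p|}{a|u|_0+|u|}$, and take the supremum. Your refinement of using only that $|\sigma_a(u)|$ is \emph{a} period of $\sigma_a(w)$ is better than the paper's appeal to the equality in Lemma~\ref{lem02exponenttransform}. It avoids any primitivity argument, and that equality can in fact fail when $j=1$: for $a=b=1$, $u=01001$, $u_p=0$, the word $w=010010$ is a prefix of the Fibonacci word, and $\sigma_1(w)=(01001)^2$ has exponent $2$ rather than $1+\tfrac{2}{8}$.
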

\begin{proof}
Immediate from Lemma~\ref{lem02exponenttransform} where every factor of the
stated form contributes, via $\sigma_a$, a factor of $\FD^{(a,b)}$ with
the exponent given by~\eqref{eq01exponenttransform}, and $\CE(\FD^{(a,b)})$
is the supremum of exponents over all factors of $\FD^{(a,b)}$, of
which the $\sigma_a$ images considered here are a subfamily.
\end{proof}

Actually, according to formula~\eqref{eqq1biperiodiccritical} it shows that does not reduce to the known
value $\CE(f)=2+\fphi$ when $a=b=1$
under the natural reading of $\alpha=\fphi$, giving $2+\fphi+1/\fphi=1+2\fphi$. 
Recall $s_n=\FD^{(a,b)}_{n+1}$ and $L_n=|s_n|$, where 
$L_n = d_n L_{n-1}+L_{n-2}$ with $(d_1,d_2,\ldots)=(a,b,a,b,\ldots)$.

\begin{proposition}~\label{prop01icelowerbound}
For every $n\geqslant 1$, $s_n^{d_{n+1}}s_{n-1}$ is a prefix of $\FD^{(a,b)}$, and consequently
\[
  \ice\bigl(\FD^{(a,b)}\bigr)\geqslant \sup_{n\geqslant 1}
  \left(d_{n+1}+\frac{L_{n-1}}{L_n}\right).
\]
\end{proposition}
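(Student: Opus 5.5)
The plan is to read the prefix off the recurrence of Definition~\ref{def01standardsequence} and then bound the exponent using a period rather than the exact smallest period. By Proposition~\ref{prop01fabisstandard}, $\FD^{(a,b)}=s^{(a,b,a,b,\ldots)}$, where $s_n=\FD^{(a,b)}_{n+1}$ is the standard sequence with directive sequence $d_k$ equal to $a$ for odd $k$ and $b$ for even $k$. For $n\geqslant 1$, the defining recurrence applied at index $n+1$ gives
\[
  s_{n+1}=s_n^{\,d_{n+1}}\,s_{n-1}.
\]
Since each $s_k$ is a prefix of $s_{k+1}$, and hence of the limit word, $s_{n+1}$ is a prefix of $\FD^{(a,b)}$. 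This is exactly the first claim. The only point to check is the indexing, namely that $s_{-1},s_0$ correspond to the seeds $\FD^{(a,b)}_0,\FD^{(a,b)}_1$ as in Proposition~\ref{prop01fabisstandard}, so that $n\geqslant 1$ keeps $s_{n-1}$ within the range $k\geqslant 0$ covered by the recurrence.

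For the exponent bound, set $w=s_n^{d_{n+1}}s_{n-1}$. Its length is $|w|=d_{n+1}L_n+L_{n-1}$. I will show that $L_n$ is a period of $w$. The key fact is that $s_{n-1}$ is a prefix of $s_n$, again by the prefix property of the standard sequence. Hence $w$ is a prefix of the periodic word $s_n^{d_{n+1}+1}$, and therefore $w_i=w_{i+L_n}$ for all admissible $i$. It follows that the smallest period satisfies $\pi(w)\leqslant L_n$, so
\[
  e(w)=\frac{|w|}{\pi(w)}\geqslant \frac{d_{n+1}L_n+L_{n-1}}{L_n}=d_{n+1}+\frac{L_{n-1}}{L_n}.
\]
Since $w$ is a prefix of $\FD^{(a,b)}$, the definition of $\ice$ gives $\ice(\FD^{(a,b)})\geqslant e(w)$. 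Taking the supremum over $n\geqslant 1$ then yields the stated inequality.

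No serious obstacle is expected. The one subtle point is to avoid claiming that $L_n$ is the \emph{smallest} period, for instance via Lemma~\ref{lem01primitivity}. That claim is unnecessary here, since we only need an upper bound on $\pi(w)$ to get a lower bound on $e(w)$. I will also note that the lengths satisfy $L_n=d_nL_{n-1}+L_{n-2}$, which follows by taking lengths in the recurrence. This is not needed for the inequality itself, but it makes the right-hand side explicitly computable.
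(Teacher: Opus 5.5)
Your proof is correct. The prefix step matches the paper's, but the exponent step takes a genuinely different route.

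\textbf{The paper's route.} It reads off $s_{n+1}=s_n^{d_{n+1}}s_{n-1}$ just as you do. It then claims the exponent \emph{equals} $d_{n+1}+L_{n-1}/L_n$. For this it invokes primitivity of $s_n$ and the smallest-period characterisation of $v^jv_p$ used in Lemma~\ref{lem02exponenttransform}.

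\textbf{Your route.} You only show that $L_n$ is \emph{a} period, because $s_{n-1}$ is a prefix of $s_n$. This gives $\pi(w)\leqslant L_n$, which is all the lower bound on $\ice$ requires, and primitivity is never used.

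This is more than a simplification. The characterisation the paper cites holds for $j\geqslant 2$ by Fine--Wilf, but it fails for $j=1$. For example, $v=0010$ and $v_p=0$ give $vv_p=00100$, whose smallest period is $3<4$. The case $d_{n+1}=1$ does occur, namely whenever $a=1$ or $b=1$.

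For standard words the equality $\pi(s_{n+1})=L_n$ is in fact true, but proving it needs additional Sturmian-specific input that the paper does not supply. So the paper's route would give the exact exponent of these prefixes, at the cost of an unproved step. Yours proves exactly the stated inequality from the recurrence alone.

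\textbf{Minor tidy-up.} Derive that $s_{n-1}$ is a prefix of $s_n$ directly from $s_n=s_{n-1}^{d_n}s_{n-2}$ with $d_n\geqslant 1$, rather than from a blanket prefix property. The blanket property fails at the bottom, since $s_{-1}=1$ is not a prefix of $s_0=0$. This failure, not the range of the recurrence, is what forces $n\geqslant 1$. At $n=0$ the word $s_0^{d_1}s_{-1}=0^a1$ does not have period $L_0=1$.
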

\begin{proof}
Since $s_n$ is primitive (a standard fact for standard Sturmian words, see
\cite{Lothaire2002}), $s_{n-1}$ is a proper prefix of $s_n$, and
$s_n^{d_{n+1}}s_{n-1}=s_{n+1}$ by
Definition~\ref{def01standardsequence}, which is itself a prefix of
$\FD^{(a,b)}$. The stated exponent then follows exactly as in
Lemma~\ref{lem02exponenttransform}, by considering $u=s_n$, $u_p=s_{n-1}$,
$j=d_{n+1}$.
\end{proof}

Actually, as with $\CE$, we have verified this lower bound is not tight at
$a=b=1$; it gives $\sup_n(d_{n+1}+L_{n-1}/L_n) \to 1+1/\fphi=\fphi$, one short of the known value $\ice(f)=1+\fphi$ (see~\cite{Cassaigne2008}). Thus, the extremal prefixes realising
$\ice$ evidently extend at least one further renormalisation level
beyond $s_{n+1}$; identifying them precisely.

\medskip 

%=============================
\subsection{Duality of Return Words}~\label{sec5duality}
Having established the duality for letter frequencies and critical exponents, we now turn to return words. This subsection examines how the return-word structure of $\FD^{(a,b)}$ transforms under the substitution duality $a\leftrightarrow b$ and provides explicit descriptions of the return words for each letter. 
We begin with the established Theorem~\ref{thm01trivialinvariants} when addressing Question~\ref{q02balance}.

\begin{theorem}~\label{thm01trivialinvariants}
For every $a,b\geqslant 1$,  $\FD^{(a,b)}$ is $1$-balanced, where $B(n)\equiv1$;
and its abelian complexity is $\mathcal{AC}_{\FD^{(a,b)}}(n)\equiv2$ for
all $n\geqslant 1$.
\end{theorem}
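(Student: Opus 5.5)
The plan is to reduce both claims to the fact that $\FD^{(a,b)}$ is Sturmian (Corollary~\ref{cor01sturmian}) and then prove the two classical Sturmian properties directly, so that the argument does not rely only on a citation. By Proposition~\ref{prop01fabisstandard}, $\FD^{(a,b)}=s^{(a,b,a,b,\ldots)}$ is a standard word with irrational slope $\beta$, which is a quadratic irrational by Proposition~\ref{prop01AlphAacf}. I will use the mechanical representation: there is $\rho$ such that for all $n\ge0$,
\[
  \FD^{(a,b)}_n=\lfloor (n+2)\beta\rfloor-\lfloor (n+1)\beta\rfloor ,
\]
up to the convention of which letter is counted. Here $\beta$ is the frequency of the counted letter, which Theorem~\ref{thm02freqduality} identifies explicitly. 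I will fix the convention by checking the first few letters $0^a1\cdots$ against the standard-word recurrence.

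\textbf{Balance.} For a factor $u$ of length $n$ starting at position $i$, the sum telescopes:
\[
  |u|_1=\lfloor (i+n+1)\beta\rfloor-\lfloor (i+1)\beta\rfloor ,
\]
which is always $\lfloor n\beta\rfloor$ or $\lfloor n\beta\rfloor+1$, since $\lfloor x+y\rfloor-\lfloor x\rfloor\in\{\lfloor y\rfloor,\lfloor y\rfloor+1\}$. Hence $\bigl||u|_1-|v|_1\bigr|\le1$ for all factors $u,v$ of equal length, so $B(n)\le1$.

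\textbf{Both values occur.} For each $n\ge1$ I need factors realising both counts. By Weyl equidistribution, or just density of $\{i\beta\}$ mod $1$ for irrational $\beta$, there are positions $i$ with $\{(i+1)\beta\}$ in $[0,1-\{n\beta\})$, which gives the count $\lfloor n\beta\rfloor$. There are also positions with $\{(i+1)\beta\}$ in $[1-\{n\beta\},1)$, which gives $\lfloor n\beta\rfloor+1$. Both intervals have positive length because $n\beta\notin\mathbb{Z}$. So $B(n)=1$ exactly, and the abelian classes of length-$n$ factors are exactly two: the Parikh vector of a binary word is determined by $|u|_1$. This gives $\mathcal{AC}_{\FD^{(a,b)}}(n)\equiv2$.

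\textbf{Main obstacle.} The delicate step is justifying the mechanical representation for the specific indexing and seeds of the standard sequence $s_k$. I would first try citing Lothaire~\cite{Lothaire2002} (Ch.~2, standard words are characteristic words $c_\beta$). As a fallback, I would give a self-contained route through balance. Sturmian words are exactly the aperiodic $1$-balanced words~\cite{MorseHedlund1940}. Aperiodicity follows because the frequency in Theorem~\ref{thm02freqduality} is irrational. For $1$-balance I would argue by induction via desubstitution $\FD^{(a,b)}=\sigma_a(\FD^{(b,a)})$ (Theorem~\ref{thm01desub}), using that $\sigma_d$ preserves balanced words. That preservation is itself the technical heart: compare counts of $1$ in two equal-length factors of $\sigma_d(w)$ by desubstituting them to factors of $w$ of nearly equal length. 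I expect this bookkeeping to be the hardest part, so I would prefer the citation route.
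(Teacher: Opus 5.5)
Your proof is correct, but it rests on a different classical input than the paper's. The paper takes Corollary~\ref{cor01sturmian} as given and applies the Morse--Hedlund characterisation \cite{MorseHedlund1940} (Sturmian $\Leftrightarrow$ aperiodic and $1$-balanced) to get the balance bound. It then asserts, citing \cite{RichommeSaariZamboni2010}, that aperiodicity forces both admissible counts of $1$ to occur. You instead use the characteristic-word representation of standard words. With it, $B(n)\le 1$ is a telescoping floor computation, and the occurrence of both counts $\lfloor n\beta\rfloor$ and $\lfloor n\beta\rfloor+1$ follows from density of $\{k\beta\}$.

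Your route names the two Parikh classes explicitly and proves $B(n)=1$ exactly; the paper only gestures at the lower bound. The paper's route is shorter and never needs the value of the slope. That step can also be done using only aperiodicity: if all length-$n$ factors had the same number of $1$'s, comparing the factors starting at $i$ and $i+1$ would give $w_{i+n}=w_i$ for all $i$, so $w$ would be periodic.

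The step you call delicate is settled by the exact form of the classical theorem (\cite{Lothaire2002}, Ch.~2). For $s_{-1}=1$, $s_0=0$, $s_k=s_{k-1}^{d_k}s_{k-2}$, the limit is the characteristic word of slope $\beta=[0;1+d_1,d_2,\dots]$. Here that is $[0;a+1,\overline{b,a}]=b/(\alpha+b)$, in agreement with Theorem~\ref{thm02freqduality}. It is inconsistent with the value $1/(1+\alpha)$ in \eqref{eqq01slopedef} when $b\neq1$; only irrationality of $\beta$ matters for your argument anyway. Your $0$-indexed formula $\lfloor (n+2)\beta\rfloor-\lfloor (n+1)\beta\rfloor$ is the right one and yields the prefix $0^a1$, so no convention-fixing by hand is needed.

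Two minor points. First, do not write $\FD^{(a,b)}_n$ for a letter, since the paper reserves that symbol for the $n$-th finite word of the recurrence. Second, your fallback induction would be circular if run on the infinite identity $\FD^{(a,b)}=\sigma_a(\FD^{(b,a)})$: together with $\FD^{(b,a)}=\sigma_b(\FD^{(a,b)})$ there is no base case. It has to be run on the finite levels $\sigma_a(\FD^{(b,a)}_n)=\FD^{(a,b)}_{n+1}$ of Theorem~\ref{thm01desub}, together with the fact that $\sigma_d$ maps finite balanced words to balanced words.
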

\begin{proof}
Since $\FD^{(a,b)}$ is an aperiodic Sturmian
word for every $a,b\geqslant 1$, it is a classical theorem of
Morse--Hedlund~\cite{MorseHedlund1940} that a binary infinite word is
aperiodic and $1$-balanced if and only if it is Sturmian, which gives
$B(n)\equiv1$. For the abelian complexity, we find that $1$-balance means that for
each $n$, the set $\{|u|_1 : u\in\mathcal{L}_n(w)\}$ consists of
consecutive integers differing by at most $1$; aperiodicity of a
Sturmian word forces both values $\lfloor n\theta\rfloor+c$ and
$\lceil n\theta\rceil+c$ for the slope $\theta$ and an offset $c$ to
occur among the factors of length $n$ for every $n\ge1$, giving exactly
two abelian classes; see~\cite{RichommeSaariZamboni2010} for this
standard fact and its contrast with the Tribonacci case.
\end{proof}

Since both invariants are constant in $(a,b)$, duality under
$a\leftrightarrow b$ holds for them trivially. Thus, $B_{\FD^{(a,b)}}(n)=B_{\FD^{(b,a)}}(n)=1$ and $\mathcal{AC}_{\FD^{(a,b)}}(n)=\mathcal{AC}_{\FD^{(b,a)}}(n)=2$ for all $n$.

\begin{definition}~\label{def01returnword}
Let $w\in\AlphA^{\N}$ and let $u\in\mathcal{L}(w)$ occur at positions
$p_0<p_1<p_2<\cdots$ in $w$. The \emph{return words} of $u$ in $w$ are
the factors $w[p_i,p_{i+1})$, $i\geqslant 0$ where each such factor begins with an
occurrence of $u$ and extends to just before the next occurrence.
\end{definition}

According to Theorem~\ref{thm01vuillon}, which applies to
every $\FD^{(a,b)}$, we compute the two return words for each letter
explicitly, using the block decomposition of $\FD^{(a,b)}=\sigma_a(\FD^{(b,a)})$
from Theorem~\ref{thm01desub}.

\begin{theorem}~\label{thm01return0}
For every $a,b\geqslant 1$, the return words of $0$ in $\FD^{(a,b)}$ are exactly $\{\,0,\ 01\,\}.$
\end{theorem}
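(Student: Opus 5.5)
The plan is to read off the return words of $0$ directly from the block decomposition $\FD^{(a,b)}=\sigma_a\bigl(\FD^{(b,a)}\bigr)$ of Theorem~\ref{thm01desub}. This decomposition writes $\FD^{(a,b)}$ as an infinite concatenation of the two blocks $\sigma_a(0)=0^a1$ and $\sigma_a(1)=0$. Both blocks begin with the letter $0$, and $a\geqslant 1$.

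The first step is to show that $11\notin\mathcal{L}(\FD^{(a,b)})$ and that $0$ occurs infinitely often. A letter $1$ occurs only as the last letter of a block $0^a1$. The next letter is therefore the first letter of the following block, which is $0$. Every block contains a $0$, so $0$ occurs infinitely often. Hence the occurrences $p_0<p_1<\cdots$ of $0$ form an infinite sequence, with $p_0=0$ because $\FD^{(a,b)}$ starts with $0$.

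The second step is to prove the inclusion $\subseteq$. The factor $\FD^{(a,b)}[p_i,p_{i+1})$ starts with $0$ and contains no further $0$, so it has the form $01^k$. Since $11$ is not a factor, $k\leqslant 1$. Every return word of $0$ is therefore $0$ or $01$.

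The third step, which is the only place needing care, is to show that both return words actually occur.
\begin{itemize}
\item The word $01$ occurs because the block $0^a1$ occurs. Indeed, $\FD^{(b,a)}$ begins with $0$, so $\FD^{(a,b)}$ begins with $0^a1$, and the $0$ immediately before that $1$ returns via $01$.
\item The word $0$ occurs as a return word exactly when $00\in\mathcal{L}(\FD^{(a,b)})$. If $a\geqslant 2$, this is immediate from the prefix $0^a1$.
\item If $a=1$, the factor $00$ arises from a block $\sigma_1(1)=0$ followed by any block, since every block starts with $0$. Such a block exists because $1$ occurs in $\FD^{(b,a)}$. For instance, $\FD^{(b,a)}_2=0^b1$ is a prefix of $\FD^{(b,a)}$, and this occurrence of $1$ is followed by further letters because the word is infinite.
\end{itemize}
Combining the three steps gives the return-word set exactly $\{0,01\}$. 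This is consistent with Theorem~\ref{thm01vuillon}, which predicts exactly two return words for the factor $0$ in a Sturmian word (Corollary~\ref{cor01sturmian}).
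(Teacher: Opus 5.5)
Your proof is correct and follows essentially the same route as the paper: both read the return words of $0$ off the block decomposition $\FD^{(a,b)}=\sigma_a\bigl(\FD^{(b,a)}\bigr)$ into blocks $0^a1$ and $0$, each of which begins with $0$. The differences are organizational. You replace the paper's three-case enumeration with the single observation $11\notin\mathcal{L}(\FD^{(a,b)})$, and your existence argument is more explicit than the paper's brief remark, in particular for $a=1$, where you use the occurrence of $1$ in $\FD^{(b,a)}_2=0^b1$.
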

\begin{proof}
Assume that $\FD^{(a,b)}=B_1B_2B_3\cdots$ where $B_i=\sigma_a(c_i)\in
\{0^a1,\,0\}$ according to whether the $i$-th letter $c_i$ of
$\FD^{(b,a)}$ is $0$ or $1$ according to Theorem~\ref{thm01desub}. Then, we enumerate every
occurrence of $0$ and its return word:

\medskip 

\noindent \textbf{Case 1.} Within a block $B_i=0^a1$ ($c_i=0$). In this case, the $a$ occurrences of $0$
in this block are consecutive positions, so the return word from the
$k$-th to the $(k{+}1)$-th of these ($1\le k\le a-1$) is simply $0$.

\medskip 

\noindent \textbf{Case 2.} From the last $0$ of a block $B_i=0^a1$. The next character is
the terminal $1$ of $B_i$, and the next occurrence of $0$ is the first
character of $B_{i+1}$ (whatever $B_{i+1}$ is, since both possible
blocks $0^a1$ and $0$ begin with $0$). Hence the return word is exactly
$01$, independently of $B_{i+1}$.

\medskip

\noindent \textbf{Case 3.} From a block $B_i=0$ ($c_i=1$). This is a single occurrence of
$0$ with nothing else in the block; the next character is the first
character of $B_{i+1}$, which is again $0$ regardless of which block
$B_{i+1}$ is. Hence the return word is exactly $0$.

Every occurrence of $0$ in $\FD^{(a,b)}$ falls into exactly one of these
three cases, all of which give return word $0$ or $01$, and both occur. This proves the claim for
$a\geqslant2$.
\end{proof}
For $a=1$, blocks $0^a1=01$ contain no internal repetition,
but the boundary case alone already produces both $0$, from a $B_i=0$
block and $01$ from the boundary after any $0^11$ block, so both
return words still occur.

\begin{theorem}~\label{thm02return1}
For every $a,b\geqslant 1$, the return words of $1$ in $\FD^{(a,b)}$ are exactly $\{\,1\,0^a,\ 1\,0^{a+1}\,\}.$
\end{theorem}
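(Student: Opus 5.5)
We use the same block decomposition as in the proof of Theorem~\ref{thm01return0}. By Theorem~\ref{thm01desub}, $\FD^{(a,b)}=\sigma_a(\FD^{(b,a)})=B_1B_2B_3\cdots$ with $B_i=\sigma_a(c_i)\in\{0^a1,\,0\}$, where $c_i$ is the $i$-th letter of $\FD^{(b,a)}$.

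\textbf{Locating the ones.} Every $1$ in $\FD^{(a,b)}$ is the final letter of a block $0^a1$, that is, of a block with $c_i=0$. Take such an occurrence, ending $B_i$. The next $1$ ends the next block $B_k$ with $c_k=0$, $k>i$. Between them lie the blocks $B_{i+1},\dots,B_{k-1}$, each equal to $0$, and then $B_k=0^a1$. So the return word is
\[
1\,0^{(k-i-1)}\,0^a=1\,0^{a+m}, \qquad m=k-i-1,
\]
and $m$ is exactly the length of the run of $1$'s in $\FD^{(b,a)}$ between two consecutive $0$'s there.

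\textbf{Reduction to runs of $1$'s.} The claim is therefore equivalent to two facts about $\FD^{(b,a)}$: consecutive $0$'s are separated by either zero or one $1$ (so $11$ is not a factor), and both gaps occur. For the first fact, apply Theorem~\ref{thm01desub} with $a,b$ exchanged: $\FD^{(b,a)}=\sigma_b(\FD^{(a,b)})$. This writes $\FD^{(b,a)}$ as a concatenation of the blocks $0^b1$ and $0$. Since $b\geqslant1$, every $1$ is immediately followed by the first letter of the next block, which is $0$. Hence $11$ never occurs and $m\in\{0,1\}$. This is the same boundary argument as Case~2 of Theorem~\ref{thm01return0}.

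\textbf{Both gaps occur.} The word $\FD^{(b,a)}$ contains both factors $00$ and $010$. It begins with $\FD^{(b,a)}_2=0^b1$, and $\FD^{(b,a)}_3=(0^b1)^a0$. Hence $0^b10$ is a prefix, which gives $m=1$. For $m=0$: if $b\geqslant2$, the factor $00$ appears at the start. If $b=1$, the prefix $(01)^a0$ followed by further blocks produces $00$, because the next block of $\sigma_b$ after a block $0$ begins with $0$. Alternatively, aperiodicity (Corollary~\ref{cor01sturmian}) rules out $\FD^{(b,a)}$ being $(01)^\omega$ eventually. In either case $00$ occurs.

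By Theorem~\ref{thm01vuillon} there are exactly two return words, so the set of return words is precisely $\{10^a,10^{a+1}\}$.

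The main obstacle is the small-parameter existence check, namely that $00$ occurs when $b=1$. This is handled by the aperiodicity argument above.
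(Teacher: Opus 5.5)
Your proof is correct and follows essentially the same route as the paper. Both write $\FD^{(a,b)}=\sigma_a(\FD^{(b,a)})$ as a product of blocks $0^a1$ and $0$, and reduce the return words of $1$ to the gaps between consecutive $0$'s in $\FD^{(b,a)}$. The paper gets that these gaps are $0$ or $1$ and that both occur by citing Theorem~\ref{thm01return0} for $\FD^{(b,a)}$; you verify it directly via $\sigma_b$, explicit prefixes and aperiodicity, which is more careful, and your closing appeal to Theorem~\ref{thm01vuillon} is unnecessary.
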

\begin{proof}
Assume that every occurrence of $1$ in $\FD^{(a,b)}$ occurs as the terminal letter of
a block $B_i=0^a1$, where it corresponds to an occurrence of the letter
$0$ at position $i$ in $\FD^{(b,a)}$. By Theorem~\ref{thm01return0}
applied to $\FD^{(b,a)}$, the return words of $0$ in $\FD^{(b,a)}$ are $0$
and $01$. Then, between two consecutive occurrences of $0$ in
$\FD^{(b,a)}$ there are either zero or exactly one intervening $1$.

\medskip

\noindent \textbf{Case 1.} If consecutive source $0$'s are
adjacent (return word $0$ in $\FD^{(b,a)}$), the corresponding blocks
$B_i=0^a1$ and $B_{i+1}=0^a1$ are themselves consecutive, so $\FD^{(a,b)}$ from one terminal $1$ to the next consists exactly of
the $a$ leading zeros of $B_{i+1}$ followed by its terminal $1$. Thus, the
return word, starting at the occurrence of
$1$, is $1\cdot0^a$.

\medskip

\noindent \textbf{Case 2.} If there is exactly one intervening source $1$ (return word $01$ in $\FD^{(b,a)}$), the corresponding image contains one extra single-letter
block $0$ from that source $1$ inserted before the next $0^a1$ block;
the gap becomes $0$ from the intervening block together with the $a$
leading zeros of the next $0^a1$ block, yielding $a+1$ zeros before the
next $1$. Thus, the return word is $1\cdot0^{a+1}$.

Since both return words of Case 1 and Case 2 of $0$ occur in $\FD^{(b,a)}$, both $1\cdot0^a$ and $1\cdot0^{a+1}$
occur as return words of $1$ in $\FD^{(a,b)}$, and by
Theorem~\ref{thm01vuillon} there are no others.
\end{proof}

Assume that $a=b=1$, then acccording to Theorem~\ref{thm01return0} predicts return words of $1$
in the classical Fibonacci word $f$ equal to $\{10,100\}$. Directly
from $f=0100101001001010\cdots$, the occurrences of $1$ are at
positions $1,4,6,9,12,14,\ldots$, giving return words $f[1,4)=100$,
$f[4,6)=10$, $f[6,9)=100$, $f[9,12)=100$, $f[12,14)=10,\ldots$,
confirming $\{10,100\}$.

\begin{corollary}~\label{cor01returnduality}
The return words of $1$ in $\FD^{(a,b)}$ and $\FD^{(b,a)}$ are $\{1\,0^a,\ 1\,0^{a+1}\}$ and 
$\{1\,0^b,\ 1\,0^{b+1}\}$ respectively. Exchanging $a$ and $b$ acts on the return-word set of $1$ by exactly replacing the exponent $a$ by $b$, while the
return-word set of $0$, $\{0,01\}$, remains unchanged. 
\end{corollary}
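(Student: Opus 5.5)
The corollary follows by specialising Theorem~\ref{thm02return1} twice, once to the ordered pair $(a,b)$ and once to $(b,a)$, and using Theorem~\ref{thm01return0} for the letter $0$.

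For the letter $1$, apply Theorem~\ref{thm02return1} with parameters $(a,b)$: the return words of $1$ in $\FD^{(a,b)}$ are $\{1\,0^a,\ 1\,0^{a+1}\}$. The same theorem holds for all positive integer pairs, so apply it to the pair $(b,a)$. The role of the first parameter is now played by $b$, and the underlying decomposition is $\FD^{(b,a)}=\sigma_b(\FD^{(a,b)})$, which is Theorem~\ref{thm01desub} with $a$ and $b$ exchanged. This gives return words $\{1\,0^b,\ 1\,0^{b+1}\}$. Concretely, every $1$ in $\FD^{(b,a)}$ is the final letter of a block $0^b1$. Between consecutive such blocks there is either nothing or a single block $0$, according to whether the corresponding return word of $0$ in $\FD^{(a,b)}$ is $0$ or $01$.

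For the letter $0$, Theorem~\ref{thm01return0} states that the return words of $0$ in $\FD^{(a,b)}$ are $\{0,01\}$ for every pair $a,b\geqslant1$. Applying it to $(b,a)$ gives the same set $\{0,01\}$ for $\FD^{(b,a)}$. This proves the invariance statement.

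The step needing the most care is checking that the proof of Theorem~\ref{thm02return1} only uses the first entry of the directive sequence. That proof relies on $\FD^{(a,b)}=\sigma_a(\FD^{(b,a)})$ together with the return words of $0$ in the preimage word $\FD^{(b,a)}$, and the latter are $\{0,01\}$ independently of the parameters. Swapping the pair therefore changes only the morphism $\sigma_a$ to $\sigma_b$. The block lengths $a+1$ and $1$ become $b+1$ and $1$, and the exponent $a$ is replaced by $b$ in each return word. Nothing else in the argument changes, so the exchange $a\leftrightarrow b$ acts on the return-word set of $1$ exactly as the substitution $a\mapsto b$ in the exponents, as claimed.
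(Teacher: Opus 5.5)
Your proposal is correct and follows the paper's route. The paper gives no separate proof, since the corollary is just Theorem~\ref{thm02return1} applied to the ordered pairs $(a,b)$ and $(b,a)$, together with Theorem~\ref{thm01return0} for the letter $0$; your extra check that the argument depends only on the first directive entry is harmless but unnecessary, because Theorem~\ref{thm02return1} is already stated for every ordered pair $a,b\geqslant 1$.
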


Corollary~\ref{cor01returnduality} shows that the letter $0$ plays a
completely symmetric role: its return words never depend on $a$ or $b$,
while the letter $1$ carries the entire $(a,b)$-dependence through the
first parameter of the word under consideration. This constitutes a
second, independent instance of the asymmetric letter roles under $\sigma_a$.

The lengths
$a+1$ and $a+2$ of these return words are also exactly the two
consecutive integers between which the length ratios $L_n/L_{n-1}$ of
Section~\ref{sec01sadic} oscillate. The decomposition of
Theorem~\ref{thm01return0} is the length-$1$ level of the
\emph{Ostrowski numeration system} associated with the continued
fraction $\chi(a,b)=[a,b,a,b,\ldots]$ of
Proposition~\ref{prop01AlphAacf}, with higher return words of the
factors $s_n$ corresponding to higher digits. 

%=================================
\subsection{Duality of Palindromic Structure}~\label{sec01palduality}
This subsection addresses the duality of the palindromic structure. We connect the palindromicity results for $\FD^{(a,b)}$ to the classical theory of central words and discuss consequences for palindromic complexity.

\begin{theorem}~\label{thm01centralpalindrome}
For every $a,b\geqslant 1$ and every $n\geqslant 2$, $\bigl(\FD^{(a,b)}_n\bigr)^{-}$ is
a palindrome.
\end{theorem}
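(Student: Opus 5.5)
The plan is to prove by induction a stronger statement that tracks the last two letters together with a "near-commutation" identity. Write $F_n=\FD^{(a,b)}_n$, $P_n=F_n^{-}$, and let $e_n$ be the last two letters of $F_n$ for $n\geqslant 2$. I claim:
\begin{enumerate}
\item[(i)] $P_n$ is a palindrome;
\item[(ii)] $e_n=01$ for $n$ even and $e_n=10$ for $n$ odd;
\item[(iii)] $P_n\,e_n\,P_{n-1}=P_{n-1}\,e_{n-1}\,P_n$ for $n\geqslant 3$.
\end{enumerate}
Here $e_{n-1}$ is the reversal of $e_n$ by (ii). Identity (iii) is the palindromic-prefix form of the classical fact (Lothaire, Ch.~2) that $s_ks_{k-1}$ and $s_{k-1}s_k$ differ exactly in their last two letters. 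I will invoke Proposition~\ref{prop01fabisstandard} only to recognise $F_n=s_{n-1}$, so that this standard framework applies.

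\textbf{Base cases.} For $n=2$, $F_2=0^a1$, so $P_2=0^{a-1}$ (empty if $a=1$) and $e_2=01$. For $n=3$, $F_3=(0^a1)^b0$, so $P_3=(0^a1)^{b-1}0^a$, which is visibly a palindrome, and $e_3=10$. Identity (iii) for $n=3$ reads
\[
(0^a1)^{b-1}0^a\,10\,0^{a-1}=0^{a-1}\,01\,(0^a1)^{b-1}0^a,
\]
and both sides equal $(0^a1)^b0^a$, so this is a direct check.

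\textbf{Inductive step for (i) and (ii).} Let $d\in\{a,b\}$ be the exponent used at level $n+1$, so $F_{n+1}=F_n^{d}F_{n-1}$. Since $|F_{n-1}|\geqslant 2$, we get $e_{n+1}=e_{n-1}$, which gives (ii). Then
\[
P_{n+1}=(P_ne_n)^{d}P_{n-1},
\qquad
\widetilde{P_{n+1}}=P_{n-1}\,(e_{n-1}P_n)^{d},
\]
using that $P_n,P_{n-1}$ are palindromes and $\widetilde{e_n}=e_{n-1}$. Applying (iii) at level $n$ repeatedly moves $P_{n-1}$ from the right end to the left end through the $d$ copies of $P_ne_n$:
\[
(P_ne_n)^{d}P_{n-1}=(P_ne_n)^{d-1}P_{n-1}e_{n-1}P_n=\cdots=P_{n-1}(e_{n-1}P_n)^{d}.
\]
The $j$-th step uses (iii) in the form $P_ne_nP_{n-1}=P_{n-1}e_{n-1}P_n$, applied to the innermost block. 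This proves (i) at level $n+1$.

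\textbf{Inductive step for (iii).} This is the main obstacle: I must show
\[
P_{n+1}e_{n+1}P_n=P_ne_nP_{n+1}.
\]
Substituting $P_{n+1}=(P_ne_n)^{d}P_{n-1}$ and $e_{n+1}=e_{n-1}$, the left side is $(P_ne_n)^{d}\,P_{n-1}e_{n-1}P_n$ and the right side is $P_ne_n\,(P_ne_n)^{d}P_{n-1}$. By (iii) at level $n$, $P_{n-1}e_{n-1}P_n=P_ne_nP_{n-1}$, so the left side becomes $(P_ne_n)^{d+1}P_{n-1}$, which is the right side. So the step closes, but only if the index bookkeeping is right: which of $e_n,e_{n-1}$ sits between which palindromes, and how the parity in (ii) interacts with the alternation $d\in\{a,b\}$. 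I will check this carefully, including the degenerate case $a=1$ where $P_2=\varepsilon$.

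Finally, the statement of Theorem~\ref{thm01centralpalindrome} is exactly part (i).
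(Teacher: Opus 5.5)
Your proof is correct and is essentially the paper's Method~2: your (ii) is its Claim~1, and your (iii), which is just $F_nP_{n-1}=F_{n-1}P_n$, is its Claim~2 that $s_{k-1}s_{k-2}$ and $s_{k-2}s_{k-1}$ differ only by a swap of their last two letters. What you add is the explicit deduction that the paper leaves as ``according to both claims'': you push $P_{n-1}$ through the $d$ copies of $P_ne_n$ to get $P_{n+1}=P_{n-1}(e_{n-1}P_n)^d=\widetilde{P_{n+1}}$, and your bookkeeping there, including the case $a=1$, is right.
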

\begin{proof}
\noindent \textbf{Method 1.} 
By Proposition~\ref{prop01fabisstandard}, the sequence $(\FD^{(a,b)}_n)_{n\ge0}$ coincides with the standard sequence $(s_n)_{n\ge0}$ associated with the directive sequence $(d_1,d_2,\ldots)=(a,b,a,b,\ldots)$. In particular, for $n\ge 2$ we have $\FD^{(a,b)}_n=s_{n-1}$.

It is a classical result in the theory of Sturmian words that every term $s_k$ ($k\ge 1$) of the standard sequence satisfies the following property: the word obtained by removing its last two letters, $(s_k)^- \ = \ s_k[0..|s_k|-3],$
is a central palindrome. This fact follows from the recursive construction of standard Sturmian words and the palindromic closure properties of Christoffel words / standard words (see, e.g., Lothaire~\cite{Lothaire2002}, Ch.~2). Since $n\geqslant 2$ implies $n-1\geqslant  1$, the word $(\FD^{(a,b)}_n)^-$ is a central palindrome.

\medskip

\noindent \textbf{Method 2.}  Since $s_{k-2}$ is a prefix of $s_{k-1}$, assume that  $s_k = t_k\,xy$
for its last two letters $x,y$.

\medskip

\noindent \textbf{Claim 1.} For $k\geqslant 1$, the last two letters of $s_k$ are $(0,1)$ if $k$ is odd, and $(1,0)$ if $k$ is even.

\noindent \textit{Proof.} 
Since $s_1 = 0^{d_1}1$, and $s_1$ ends with $01$. Then, $s_2 = s_1^{d_2}s_0.$
The final block is $s_0=0$, preceded by the last letter of $s_1$ which is $1$. Thus, $s_2$ ends with $10$. For $k\geqslant 3$, we have $s_k = s_{k-1}^{d_k}s_{k-2}.$

Since $|s_{k-2}|\ge |s_1|\geqslant 2$, the last two letters of $s_k$ are exactly the last two letters of the final block $s_{k-2}$. By strong induction, the claim for $k-2$ follows, with the same parity as $k$. \hfill \qedsymbol

\medskip

\noindent \textbf{Claim 2.} For every $k\geqslant 1$, the words $s_{k-1}s_{k-2}$ and $s_{k-2}s_{k-1}$ have the same length and the same degree in every letter except the last two, which are exchanged, if $s_{k-1}s_{k-2}=R_k\,xy$. Then, $s_{k-2}s_{k-1}=R_k\,yx.$

\noindent \textit{Proof.} 
For $k=1$, we have $s_0s_{-1}=01$ and $s_{-1}s_0=10,$ so the claim is immediate: the two words agree on the empty prefix and only the last two letters are swapped. Assume $k\geqslant 2$. By using $s_{k-1}=s_{k-2}^{d_{k-1}}s_{k-3},$
we obtain  $s_{k-1}s_{k-2}=s_{k-2}^{d_{k-1}-1}\cdot (s_{k-2}s_{k-3}).$
Similarly, $s_{k-2}s_{k-1}=s_{k-2}^{d_{k-1}-1}\cdot (s_{k-2}s_{k-3}).$ 

Thus both words are the same prefix $s_{k-2}^{d_{k-1}-1}$ followed by the pair $(s_{k-2}s_{k-3})$, which is exactly the claim at level $k-1$. By induction, that pair agrees except in its last two letters, which are swapped; prepending the identical block preserves this.
\hfill \qedsymbol 

Thus, according to both claims $\bigl(\FD^{(a,b)}_n\bigr)^{-}$ is
a palindrome.
\end{proof}

This is an instance of the classical fact that the levels of a standard
Sturmian sequence, the same statement as the palindromicity of
$\Phi(\FD^{(a,b)}_n)$ established in~\cite{Ramirez2015}, where 
$\Phi(\FD^{(a,b)}_n)=\bigl(\FD^{(a,b)}_n\bigr)^{-}$ or a word differing
from it by a bounded, easily tracked correction.  

\begin{corollary}~\label{cor002arbitrary}
$\FD^{(a,b)}$ has palindromic prefixes of arbitrarily large length; in
particular $P_{\FD^{(a,b)}}(|\FD^{(a,b)}_n|-2)\geqslant 1$ for every $n\geqslant 2$.
\end{corollary}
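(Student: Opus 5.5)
The plan is to derive Corollary~\ref{cor002arbitrary} directly from Theorem~\ref{thm01centralpalindrome}. I will not build any new palindromes. There are three steps: show that each palindrome supplied by the theorem is a prefix of $\FD^{(a,b)}$, show that these prefixes have unbounded length, and then read off the palindromic complexity bound.

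\textbf{Step 1 (prefix property).} Fix $n\geqslant 2$. By the remark opening Section~\ref{sec02prelim}, $\FD^{(a,b)}_n$ is a prefix of $\FD^{(a,b)}_{n+1}$. By induction it is then a prefix of every later term, and hence of the limit $\FD^{(a,b)}$. Deleting the last two letters of a prefix gives another prefix. So $\bigl(\FD^{(a,b)}_n\bigr)^{-}$ is a prefix of $\FD^{(a,b)}$, and by Theorem~\ref{thm01centralpalindrome} it is a palindrome.

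\textbf{Step 2 (unbounded length).} Write $L'_n=|\FD^{(a,b)}_n|$. The recurrence~\eqref{eqq01biperiodicrecurrence} gives $L'_n=d\,L'_{n-1}+L'_{n-2}$ with $d\in\{a,b\}$, $d\geqslant 1$. Since $L'_0=L'_1=1$, we get $L'_n\geqslant L'_{n-1}+L'_{n-2}$. Hence $L'_n$ is at least the $n$-th Fibonacci number and tends to infinity. The palindromic prefix from Step 1 has length $L'_n-2$. For $n=2$ this is $a-1\geqslant 0$. For $n\geqslant 3$ we have $L'_n\geqslant 3$, so the length is positive, and it tends to infinity with $n$. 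This gives palindromic prefixes of arbitrarily large length.

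\textbf{Step 3 (complexity bound).} For $n\geqslant 2$, the word $\bigl(\FD^{(a,b)}_n\bigr)^{-}$ is a palindrome of length $|\FD^{(a,b)}_n|-2$ that lies in $\mathcal{L}(\FD^{(a,b)})$. By Definition~\ref{def01palcomplexity} it is counted by $P_{\FD^{(a,b)}}(|\FD^{(a,b)}_n|-2)$, so this value is at least $1$.

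The only point that needs care is the degenerate case $n=2$ with $a=1$. There the palindrome is the empty word, of length $0$. I treat $P(0)=1$ through the empty palindrome, which is consistent with Definition~\ref{def01palcomplexity} when $\mathcal{L}_0$ contains $\varepsilon$. Otherwise nothing beyond Theorem~\ref{thm01centralpalindrome} is needed.
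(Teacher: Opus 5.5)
Your proposal is correct and follows the paper's own argument: $\bigl(\FD^{(a,b)}_n\bigr)^{-}$ is a palindrome by Theorem~\ref{thm01centralpalindrome}, it is a prefix (hence a factor) of $\FD^{(a,b)}$, and $|\FD^{(a,b)}_n|\to\infty$. You also supply details the paper leaves implicit, namely the Fibonacci-type lower bound on lengths and the empty-palindrome case $n=2$, $a=1$; one small caveat is that the prefix remark you cite from Section~\ref{sec02prelim} is false at $n=0$ (since $\FD^{(a,b)}_0=1$ is not a prefix of $\FD^{(a,b)}_1=0$), but for $n\geqslant 1$ it follows at once from the recurrence because the exponent is at least $1$, and that is the only range you use.
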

\begin{proof}
Since $\FD^{(a,b)}_n$ is a prefix of $\FD^{(a,b)}$ by Proposition~\ref{prop01fabisstandard} and the discussion preceding
it, $\bigl(\FD^{(a,b)}_n\bigr)^{-}$ is a palindromic factor of $\FD^{(a,b)}$. As $|\FD^{(a,b)}_n|\to\infty$, the claim follows.
\end{proof}

Corollary~\ref{cor002arbitrary} is consistent with,
and gives an independent confirmation of, the characteristic standard
status of $\FD^{(a,b)}$ already established in Corollary~\ref{cor01sturmian}.

%==============================
%========================
\section{The Continued Fraction Expansion of the Slope of \texorpdfstring{$\FD^{(a,b)}$}{FD(a,b)}}\label{sec01continuedfraction}

This section determines the continued fraction expansion of the slope $\theta^{(a,b)}$ of the biperiodic word $\FD^{(a,b)}$. The explicit expansion allows us to derive several quantitative properties of these words independently and reveals that both the slope and the index $\operatorname{Ind}(\FD^{(a,b)})$ depend on the pair $(a,b)$ only through the product $ab$.

We determine the continued fraction expansion of the
slope of the biperiodic Fibonacci word $\FD^{(a,b)}$, i.e.\ of the
irrational number
\begin{equation}~\label{eqq01slopedef}
  \theta^{(a,b)}\;:=\; \lim_{n\to\infty}\frac{|\FD^{(a,b)}_n|_1}{|\FD^{(a,b)}_n|}
  \;=\; \frac{1}{1+\AlphA },
\end{equation}
where $\alpha=\alpha(a,b)$ is as in \eqref{eqq1AlphAadef}. This
result is the technical core of the paper: it lets us re-derive
$\operatorname{CE}(\FD^{(a,b)})$ independently,
rather than treating~\eqref{eqq1biperiodiccritical} as an unverified
black box.

Throughout, $a,b$ are positive integers, and for a real number
$x>1$ we write $x=[a_0;a_1,a_2,\dots]$ for its (simple) continued
fraction expansion, and $x=[\overline{a_1,\dots,a_k}]$ if the
expansion is purely periodic with period $(a_1,\dots,a_k)$. 
Recall that a quadratic irrational $x>1$ has a \emph{purely periodic}
continued fraction expansion if and only if $x$ is \emph{reduced},
i.e.\ its conjugate $x'$ satisfies $-1<x'<0$.

\begin{proposition}~\label{prop01alphareduced}
For all integers $a,b\geqslant 1$, $\AlphA =\AlphA (a,b)$ is a reduced
quadratic irrational.
\end{proposition}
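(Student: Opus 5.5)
The statement follows directly from the minimal polynomial of $\AlphA$, together with an estimate of its conjugate. By \eqref{eqq1AlphAadef}, $\AlphA=\AlphA(a,b)=\tfrac12\bigl(ab+\sqrt{a^2b^2+4ab}\bigr)$. This is a root of
\[
  X^2-abX-ab=0,
\]
the same equation recovered in the proof of Theorem~\ref{thm02freqduality}, namely $\alpha^2=ab\alpha+ab$.

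\emph{Irrationality.} We show that the discriminant $D=a^2b^2+4ab$ is not a perfect square. Put $m=ab\geqslant 1$. Then
\[
  (m+1)^2=m^2+2m+1<m^2+4m<m^2+4m+4=(m+2)^2 \quad\text{whenever } m\geqslant 1,
\]
and the first inequality is strict precisely because $2m>1$. So $D$ lies strictly between two consecutive squares, and $\AlphA$ is a quadratic irrational.

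\emph{Size of $\AlphA$.} Since $\sqrt{D}>m$, we get $\AlphA>m\geqslant 1$, so $\AlphA>1$.

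\emph{The conjugate.} The conjugate is $\AlphA'=\tfrac12\bigl(m-\sqrt{D}\bigr)$.
\begin{enumerate}
  \item $\AlphA'<0$, because $\sqrt{D}>m$. Equivalently, by Vieta the product of the roots is $\AlphA\AlphA'=-m<0$.
  \item $\AlphA'>-1$. This is equivalent to $\sqrt{D}<m+2$, which holds by the bound above.
\end{enumerate}
Alternatively, evaluate the polynomial $p(X)=X^2-mX-m$ at the endpoints: $p(0)=-m<0$ and $p(-1)=1>0$. By the intermediate value theorem the negative root lies in $(-1,0)$.

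Hence $\AlphA>1$ and $-1<\AlphA'<0$, so $\AlphA$ is reduced. Only the bound $\sqrt{D}<m+2$ needs any care, and it is the elementary square comparison above. I would also note that the argument depends only on $ab$, matching the remark that the slope depends on the product $ab$.
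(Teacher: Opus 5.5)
Your proof is correct and follows the same route as the paper: $\alpha > ab \geqslant 1$ from $\sqrt{a^2b^2+4ab} > ab$, and $-1 < \alpha' < 0$ from $ab < \sqrt{a^2b^2+4ab} < ab+2$. Your check that $a^2b^2+4ab$ lies strictly between $(ab+1)^2$ and $(ab+2)^2$, which shows $\alpha$ is genuinely irrational, fills in a point the paper's proof leaves implicit.
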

\begin{proof}
Since $a,b\geqslant 1$ we have $ab\geqslant 1$, so $a^2b^2+4ab>a^2b^2$. Hence, 
$\AlphA =\tfrac{ab+\sqrt{a^2b^2+4ab}}{2} > ab \geqslant 1$. For the conjugate, since $4ab>0$ we have
$\sqrt{a^2b^2+4ab}>ab$. Then, 
\[
  \AlphA' = \frac{ab-\sqrt{a^2b^2+4ab}}{2} < 0 .
\]
It remains to show $\AlphA'>-1$. Note that 
$\sqrt{a^2b^2+4ab} < ab+2$. As both sides are positive, this is
equivalent to
\[
  a^2b^2+4ab \;<\; (ab+2)^2 \;=\; a^2b^2+4ab+4,
\]
which holds for every $a,b$. Hence $-1<\AlphA '<0$, so $\AlphA $ is
reduced.
\end{proof}

\begin{lemma}~\label{lem01alphaquadratic}
The constant $\AlphA =\AlphA (a,b)=(\sqrt{a^2b^2+4ab}+ab)/2$ is
the unique positive root of
\begin{equation}~\label{eqq01alphaminpoly}
  t^2 - ab\,t - ab = 0.
\end{equation}
Consequently its algebraic conjugate is
$\AlphA ' = (ab-\sqrt{a^2b^2+4ab})/2$, and
\[
  \AlphA +\AlphA ' = ab, \qquad \AlphA \,\AlphA ' = -ab.
\]
\end{lemma}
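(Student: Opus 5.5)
The plan is a direct verification followed by Vieta's formulas; no earlier structural result is needed beyond elementary algebra, though Proposition~\ref{prop01alphareduced} already supplies the sign information we use.

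First, I verify that $\AlphA$ satisfies~\eqref{eqq01alphaminpoly}. Write $D=a^2b^2+4ab$, so $\AlphA=(ab+\sqrt D)/2$. Then
\[
  \AlphA^2=\frac{a^2b^2+2ab\sqrt D+D}{4}=\frac{2a^2b^2+4ab+2ab\sqrt D}{4},
\]
and this equals $ab\,\AlphA+ab=\tfrac{a^2b^2+ab\sqrt D}{2}+ab$. Equivalently, the quadratic formula applied to $t^2-ab\,t-ab$ returns exactly the two roots $(ab\pm\sqrt D)/2$, since the discriminant is $a^2b^2+4ab=D$.

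Next, I identify which root is positive. Since $D>a^2b^2$, we have $\sqrt D>ab$. Hence $(ab+\sqrt D)/2>0$, while $(ab-\sqrt D)/2<0$; this is also the content of Proposition~\ref{prop01alphareduced}. Alternatively, the product of the roots is $-ab<0$, so the roots have opposite signs. Either argument shows that exactly one root is positive, and that root is $\AlphA$, which gives uniqueness.

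For the conjugate, I check that the polynomial is irreducible over $\mathbb{Q}$, so that it is the minimal polynomial and $\AlphA'$ is genuinely the algebraic conjugate. This requires $D$ not to be a perfect square. The key step is the strict bounds $(ab)^2<D<(ab+2)^2$, which are exactly the inequalities from Proposition~\ref{prop01alphareduced}. Thus $D$ could only be a square if $D=(ab+1)^2=a^2b^2+2ab+1$. That would force $2ab=1$, which is impossible for integers. So $\sqrt D$ is irrational and $\AlphA$ is a quadratic irrational with conjugate $\AlphA'=(ab-\sqrt D)/2$.

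Finally, Vieta's formulas for the monic quadratic $t^2-ab\,t-ab$ give $\AlphA+\AlphA'=ab$ and $\AlphA\AlphA'=-ab$; one can also read these off directly from the explicit expressions. The only step that is not purely mechanical is the irreducibility check, and it is handled by the square-sandwich argument above.
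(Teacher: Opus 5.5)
Your proof is correct and follows the paper's route: the quadratic formula with discriminant $a^2b^2+4ab$, the sign of $ab-\sqrt{a^2b^2+4ab}$ to single out the unique positive root, and Vieta's formulas. Your extra step that the paper's one-line proof omits is worth keeping. Showing that $a^2b^2+4ab$ lies strictly between $(ab)^2$ and $(ab+2)^2$ and cannot equal $(ab+1)^2$ proves the quadratic is irreducible over $\mathbb{Q}$, and that is exactly what justifies calling $(ab-\sqrt{a^2b^2+4ab})/2$ the algebraic conjugate.
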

\begin{proof}
Immediate from the quadratic formula applied to
$t^2-ab\,t-ab=0$, whose discriminant is $a^2b^2+4ab$; the positive
root is exactly the stated expression for $\AlphA $.
\end{proof}

Lemma~\ref{lem01alphaquadratic} shows that $\AlphA (a,b)$ depends on
$a$ and $b$ only through the product $ab$: if $ab=a'b'$ then
$\AlphA (a,b)=\AlphA (a',b')$, even though $(a,b)\neq(a',b')$. For
$a=b=1$ this recovers $\AlphA =\varphi$, the golden ratio, as the
positive root of $t^2-t-1=0$.

\begin{theorem}~\label{thm01alphacf}
For all integers $a,b\geqslant 1$,
\begin{equation}~\label{eq01alphacf}
  \AlphA (a,b) \;=\; [\overline{ab,\,1}].
\end{equation}
\end{theorem}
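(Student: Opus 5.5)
We prove $\AlphA(a,b)=[\overline{ab,\,1}]$ by exhibiting a purely periodic continued fraction that satisfies the same quadratic equation as $\AlphA$, and then using uniqueness of the positive root together with uniqueness of the continued fraction expansion of an irrational.

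First we fix the candidate. Put $y=[\overline{ab,1}]$. This is a well-defined real number with $y>ab\geqslant 1$, being the limit of convergents of an infinite simple continued fraction with positive partial quotients. By periodicity,
\[
  y = ab + \cfrac{1}{1+\cfrac{1}{y}} = ab + \frac{y}{y+1}.
\]
Clearing denominators gives $y(y+1)=ab(y+1)+y$, that is, $y^2 - ab\,y - ab = 0$. This is exactly the polynomial \eqref{eqq01alphaminpoly}.

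By Lemma~\ref{lem01alphaquadratic}, $\AlphA$ is the unique positive root of this equation; the other root $\AlphA'$ is negative. Since $y>0$, we conclude $y=\AlphA$.

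As a consistency check, Proposition~\ref{prop01alphareduced} shows $\AlphA$ is reduced, so by Galois' theorem its expansion must be purely periodic, in agreement with the period $(ab,1)$ we found. Moreover, simple continued fraction expansions of irrationals are unique, so $[\overline{ab,1}]$ is the expansion of $\AlphA$ and not merely some representation of it. We can also confirm the first partial quotient directly: $ab<\AlphA<ab+1$, because $\sqrt{a^2b^2+4ab}<ab+2$ (shown in Proposition~\ref{prop01alphareduced}) and $\sqrt{a^2b^2+4ab}>ab$. Hence $\lfloor\AlphA\rfloor=ab$. The complete quotient is then $1/(\AlphA-ab)=1+1/\AlphA$, since $\AlphA^2=ab\AlphA+ab$ gives $\AlphA-ab=ab/\AlphA$ and $\AlphA/ab=1+1/\AlphA$ by the same relation. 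Its floor is $1$, and the next complete quotient is $\AlphA$ again, which reproduces the period $(ab,1)$ by a direct Euclidean-algorithm computation.

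There is no substantial obstacle. The only point needing care is to justify convergence of the periodic continued fraction, or, equivalently, to use the direct floor computation of the last step; that computation avoids any convergence issue.
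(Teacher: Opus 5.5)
Your proposal is correct and follows the paper's argument. Like the paper, you show that $y=[\overline{ab,1}]$ satisfies $y=ab+y/(y+1)$, hence $y^2-ab\,y-ab=0$, and then identify $y$ with the positive root $\AlphA$. Your additional complete-quotient computation ($\lfloor\AlphA\rfloor=ab$ and $1/(\AlphA-ab)=1+1/\AlphA$) is a self-contained confirmation that this is the expansion of $\AlphA$, without appealing to reducedness or convergence.
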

\begin{proof}
By Proposition~\ref{prop01alphareduced}, $\AlphA $ has a unique
purely periodic continued fraction expansion. It therefore suffices
to check that $x:=[\overline{ab,1}]$ satisfies the same minimal
polynomial~\eqref{eqq01alphaminpoly} as $\AlphA $, and that $x>1$ with
conjugate in $(-1,0)$.

For $x\geqslant 1$, 
\[
  x = ab + \cfrac{1}{1+\cfrac{1}{x}} = ab+\frac{x}{x+1}.
\]
Clearing the denominator yields $x(x+1) = ab(x+1) + x$, so 
\[
  x^2 + x = abx + ab + x \qquad \Longrightarrow \qquad x^2 - ab\,x - ab = 0,
\]
which is exactly~\eqref{eqq01alphaminpoly}. Since $x=[\overline{ab,1}]>ab\ge1$
and the two roots of~\eqref{eqq01alphaminpoly} are $\AlphA >1$ and
$\AlphA '\in(-1,0)$, it follows that $x=\AlphA $.
\end{proof}

\begin{example}
According to Theorem~\ref{thm01alphacf}, 
\begin{enumerate}
    \item For $a=b=1$ (so $ab=1$), we have $\AlphA =[\overline{1,1}]=[\overline{1}]=\varphi$,
recovering the classical golden ratio.
    \item For $a=2,b=3$ (so $ab=6$), we have $\AlphA =[\overline{6,1}]$. Numerically,
$\AlphA \approx 6.8730$, which agrees with
$\AlphA (2,3)=\bigl(\sqrt{36+24}+6\bigr)/2\approx 6.8730$.
\end{enumerate}
\end{example}

We now pass from $\AlphA $ to the actual slope $\theta^{(a,b)}=1/(1+\AlphA )$
of $\FD^{(a,b)}$.

\begin{theorem}~\label{thm01slopecf}
For all integers $a,b\geqslant 1$, the slope of the biperiodic Fibonacci
word $\FD^{(a,b)}$ has continued fraction expansion
\begin{equation}~\label{eqq2slopecf}
  \theta^{(a,b)} \;=\; \bigl[\,0;\; ab+1,\; \overline{1,\,ab}\,\bigr].
\end{equation}
\end{theorem}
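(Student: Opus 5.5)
The plan is to work directly from the definition~\eqref{eqq01slopedef}, $\theta^{(a,b)}=1/(1+\AlphA)$, and to feed in the expansion $\AlphA=[\overline{ab,1}]$ from Theorem~\ref{thm01alphacf}. A continued fraction with integer part $0$ has first partial quotient equal to the integer part of the reciprocal. So the first step is to write
\[
  \frac{1}{\theta^{(a,b)}} = 1+\AlphA = 1+[ab;\overline{1,ab}] = [ab+1;\overline{1,ab}],
\]
using that $\AlphA=[ab;1,ab,1,\dots]=[ab;\overline{1,ab}]$. Adding the integer $1$ to a continued fraction only changes its integer part, so the tail $[0;\overline{1,ab}]=\AlphA-ab\in(0,1)$ is untouched. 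Taking reciprocals then gives $\theta^{(a,b)}=[0;ab+1,\overline{1,ab}]$, which is~\eqref{eqq2slopecf}.

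The steps, in order, are as follows.

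\begin{enumerate}
\item Check that $ab<\AlphA<ab+1$. This is already contained in Proposition~\ref{prop01alphareduced}: we have $\AlphA>ab$, and $\sqrt{a^2b^2+4ab}<ab+2$ gives $\AlphA<ab+1$. Hence $\lfloor\AlphA\rfloor=ab$ and $\lfloor 1+\AlphA\rfloor=ab+1$.
\item Rewrite the purely periodic expansion $[\overline{ab,1}]$ as $[ab;\overline{1,ab}]$. This is only a regrouping of the period after the leading term.
\item Use uniqueness of the simple continued fraction expansion of an irrational number to identify the resulting expansion of $\theta^{(a,b)}$. Irrationality follows since $a^2b^2+4ab$ lies strictly between $(ab)^2$ and $(ab+2)^2$ and has the parity of $ab$, so it is not a square.
\end{enumerate}

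As a sanity check I would verify the result directly. Set $y=[\overline{1,ab}]$, so that $y=1+\frac{1}{ab+1/y}$. Then confirm that $\theta=1/(ab+1+1/y)$ satisfies the quadratic obtained from~\eqref{eqq01alphaminpoly} under $t=1/\theta-1$, namely $ab\,\theta^2-(ab+2)\theta+1=0$, and that its root in $(0,1)$ is the correct one.

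The main obstacle is not the arithmetic but the identification of the slope. There is a tension with Theorem~\ref{thm02freqduality}, which gives $\operatorname{freq}_1=b/(\AlphA+b)$ rather than $1/(1+\AlphA)$, and the two agree only when $b=1$. I would therefore state explicitly that the proof takes~\eqref{eqq01slopedef} as the definition of $\theta^{(a,b)}$, and remark that the expansion~\eqref{eqq2slopecf} depends only on $ab$, consistent with Lemma~\ref{lem01alphaquadratic}. If the true letter frequency is intended instead, the same method applies to $b/(\AlphA+b)$, but the answer would differ; for example, for $b=1$ it coincides, and for general $b$ one expands $1+\AlphA/b=1+\chi(a,b)=[a+1;\overline{b,a}]$ via Proposition~\ref{prop01AlphAacf}.
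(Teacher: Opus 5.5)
This is the paper's proof: read $\alpha=[ab;\overline{1,ab}]$ off Theorem~\ref{thm01alphacf}, add $1$ to the integer part to get $1+\alpha=[ab+1;\overline{1,ab}]$, and invert; your floor and irrationality checks make explicit what the paper leaves implicit, and your remark that this computes $1/(1+\alpha)$, which agrees with the letter frequency $b/(\alpha+b)=[0;a+1,\overline{b,a}]$ of Theorem~\ref{thm02freqduality} only when $b=1$, correctly flags an inconsistency in \eqref{eqq01slopedef} itself rather than a gap in your argument. One slip is in the optional sanity check: substituting $t=1/\theta-1$ into $t^2-ab\,t-ab=0$ gives $\theta^2-(ab+2)\theta+1=0$, not $ab\,\theta^2-(ab+2)\theta+1=0$, and the latter is false whenever $ab\neq1$.
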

\begin{proof}
According to Theorem~\ref{thm01alphacf}, $\AlphA =[\overline{ab,1}]$, so 
$\lfloor\AlphA \rfloor=ab$ and the fractional part is
\[
  \{\AlphA \} \;=\; \frac{1}{[\,\overline{1,\,ab}\,]}\,.
\]
Thus,
\[
  1+\AlphA  = (ab+1) + \{\AlphA \} = \bigl[ab+1;\,\overline{1,\,ab}\,\bigr].
\]
Since $1+\AlphA >1$, the reciprocal rule for continued fractions gives
\[
  \theta^{(a,b)} = \frac{1}{1+\AlphA }
  = \bigl[\,0;\, ab+1,\,\overline{1,\,ab}\,\bigr].
\]
\end{proof}

\begin{example}
For $a=b=1$, Theorem~\ref{thm01slopecf} gives
$\theta^{(1,1)} = [0;2,\overline{1,1}] = [0;2,\overline{1}]$, which is
the continued fraction expansion of $1/\varphi^2\approx0.381966$,
the known slope of the classical Fibonacci word. For $a=2,b=3$, we obtain $\theta^{(2,3)}=[0;7,\overline{1,6}]$.
\end{example}

The slope $\theta^{(a,b)}$, and
hence the continued-fraction data that determines it, depends on the
pair $(a,b)$ only through the integer $ab$. In particular, whenever
$ab=a'b'$ for two distinct pairs $(a,b)\neq(a',b')$ (for example, 
$(a,b)=(2,3)$ and $(a',b')=(1,6)$), the words
$\FD^{(a,b)}$ and $\FD^{(a',b')}$ have the same slope
$\theta^{(a,b)}=\theta^{(a',b')}$ and therefore the same critical exponent. 
For integers $a,b\geqslant 1$, the finite biperiodic Fibonacci words
$\FD^{(a,b,n)}$ are defined by
\[
  \FD^{(a,b,0)}=\varepsilon,\quad \FD^{(a,b,1)}=0,\quad \FD^{(a,b,2)}=0^{a-1}1,
\]
\[
  \FD^{(a,b,n)} =
  \begin{cases}
    \bigl(\FD^{(a,b,n-1)}\bigr)^{a} \FD^{(a,b,n-2)}, & n\geqslant 3 \text{ even},\\[2pt]
    \bigl(\FD^{(a,b,n-1)}\bigr)^{b} \FD^{(a,b,n-2)}, & n\geqslant 3 \text{ odd},
  \end{cases}
\]
and $\FD^{(a,b)}:=\lim_{n\to\infty}\FD^{(a,b,n)}$. Their length
$q_n:=|\FD^{(a,b,n)}|$ satisfies the recurrence
\begin{equation}~\label{eqq01qnrecurrence}
  q_0=0,\ q_1=1,\qquad
  q_n = \begin{cases} a\,q_{n-1}+q_{n-2}, & n\geqslant 2\text{ even},\\
                       b\,q_{n-1}+q_{n-2}, & n\geqslant 2\text{ odd}.\end{cases}
\end{equation}

\begin{proposition}~\label{prop001convergentidentity}
For all $n\geqslant 0$, the length $q_n=|\FD^{(a,b,n)}|$ defined
by~\eqref{eqq01qnrecurrence} coincides with the denominator $q_{n-1}$ of the $(n-1)$-th convergent to the continued fraction $\zeta=[0;\overline{a,b}]$ (with the convention $q_{-1}=0$).
\end{proposition}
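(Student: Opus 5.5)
The plan is to compare two linear recurrences with identical coefficients and matching initial data, and then conclude by induction on $n$. First, write $\zeta=[0;\overline{a,b}]=[0;a_1,a_2,a_3,\dots]$ with partial quotients $a_k=a$ for $k$ odd and $a_k=b$ for $k$ even. Denote the convergent denominators by $Q_k$ to avoid a clash with the word lengths $q_n$. They satisfy the classical recurrence
\[
Q_{-1}=0,\qquad Q_0=1,\qquad Q_k=a_kQ_{k-1}+Q_{k-2}\quad(k\geqslant 1).
\]
So $Q_1=a$ and $Q_2=ab+1$. The claim to be proved is $q_n=Q_{n-1}$ for all $n\geqslant 0$.

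Second, I will confirm that the lengths $q_n=|\FD^{(a,b,n)}|$ really obey~\eqref{eqq01qnrecurrence}, including the initial terms.
\begin{enumerate}
\item The seeds give $q_0=|\varepsilon|=0$ and $q_1=|0|=1$.
\item For $n=2$, the word $0^{a-1}1$ has length $a=a\cdot q_1+q_0$, so the even-case formula already holds at $n=2$.
\item For $n\geqslant 3$, the defining concatenation $\bigl(\FD^{(a,b,n-1)}\bigr)^{d}\FD^{(a,b,n-2)}$, with $d=a$ for $n$ even and $d=b$ for $n$ odd, has length $d\,q_{n-1}+q_{n-2}$, because length is additive on concatenations.
\end{enumerate}

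Third, I run a two-step induction. The base cases are $q_0=0=Q_{-1}$ and $q_1=1=Q_0$. For the inductive step at $n\geqslant 2$, the coefficient used in $q_n$ is $a$ exactly when $n$ is even. That is exactly when $n-1$ is odd, i.e.\ when $a_{n-1}=a$; likewise, $n$ odd corresponds to $a_{n-1}=b$. Assuming $q_{n-1}=Q_{n-2}$ and $q_{n-2}=Q_{n-3}$, we then get
\[
q_n=a_{n-1}q_{n-1}+q_{n-2}=a_{n-1}Q_{n-2}+Q_{n-3}=Q_{n-1},
\]
which completes the induction.

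The only genuine obstacle is bookkeeping. The parity convention in~\eqref{eqq01qnrecurrence} (even $n$ uses $a$) must line up with the odd-indexed partial quotients $a_{2k+1}=a$ of $[0;\overline{a,b}]$. The shifted seed $0^{a-1}1$ must produce $q_2=a$ rather than $a+1$, which would be the length of $\sigma_a(0)=0^a1$ used earlier in Section~\ref{sec01sadic}. And the convention $Q_{-1}=0$ must match the empty word at $n=0$. I will therefore check the small cases $n=0,1,2,3$ explicitly, giving $0,1,a,ab+1$ on both sides, before invoking the induction.
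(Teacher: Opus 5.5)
Your proposal is correct and takes the same route as the paper, whose proof only notes that the word lengths and the convergent denominators share initial conditions and the same parity-dependent two-term recurrence. You make the bookkeeping explicit, which the paper leaves implicit: you rename the denominators $Q_k$, align the parity of $n$ with the partial quotient $a_{n-1}$, and check that the seed $0^{a-1}1$ gives $q_2=a$.
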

\begin{proof}
The sequences satisfy the same initial conditions and the same parity-dependent two-term linear recurrence.
\end{proof}

Now let $M:=\max(a,b)$ and $P:=ab$. Define
\begin{equation}~\label{eqq01indMP}
  \operatorname{Ind}\bigl(\FD^{(a,b)}\bigr) = 2+M\left(1+\frac1{\AlphA (P)}\right),
  \qquad   \AlphA (P)=\frac{P+\sqrt{P^2+4P}}{2}.
\end{equation}

\begin{lemma}~\label{lem01alphamonotone}
The function $\AlphA (P)$ is strictly increasing for $P>0$.
\end{lemma}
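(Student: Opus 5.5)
The plan is to differentiate the closed form from \eqref{eqq01indMP} directly. For $P>0$ write
\[
  \AlphA(P)=\frac{P+\sqrt{P^2+4P}}{2},
\]
which is a smooth function on $(0,\infty)$, since $P^2+4P>0$ there. Its derivative is
\[
  \AlphA'(P)=\frac12\left(1+\frac{2P+4}{2\sqrt{P^2+4P}}\right)=\frac12\left(1+\frac{P+2}{\sqrt{P^2+4P}}\right).
\]
For $P>0$ both $P+2$ and $\sqrt{P^2+4P}$ are positive, so the bracket exceeds $1$ and $\AlphA'(P)>\tfrac12>0$. Strict monotonicity then follows from the mean value theorem.

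There is also a derivative-free route, which I would record as a remark to avoid calculus. The map $P\mapsto P$ is strictly increasing. The map $P\mapsto P^2+4P=(P+2)^2-4$ is strictly increasing on $P>0$, and composing it with the strictly increasing square root keeps it strictly increasing. The sum of two strictly increasing functions is strictly increasing, and halving preserves this.

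A third check uses Lemma~\ref{lem01alphaquadratic}, which says $\AlphA$ is the positive root of $t^2-Pt-P=0$. Equivalently $P=t^2/(t+1)$ for $t=\AlphA(P)$. The map $t\mapsto t^2/(t+1)$ is strictly increasing on $t>0$, since its derivative is $t(t+2)/(t+1)^2>0$. So $\AlphA$ is the inverse of a strictly increasing bijection from $(0,\infty)$ to itself, and is therefore strictly increasing.

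There is no real obstacle here. The only point needing care is the domain: $P>0$ guarantees the square root is real and positive, so the derivative formula is valid. I would present the elementary composition argument as the proof, since it is the shortest and needs no smoothness.
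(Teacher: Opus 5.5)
Your proposal is correct, and your first argument, differentiating to get $\frac12\bigl(1+\frac{P+2}{\sqrt{P^2+4P}}\bigr)>0$ on $P>0$, is exactly the paper's proof. The calculus-free composition argument you prefer and the inverse-function check via $P=t^2/(t+1)$ are both valid as well, but for a statement this elementary they are alternatives rather than an improvement.
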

\begin{proof}
Differentiating gives $\AlphA '(P) = \frac12\left(1+\frac{P+2}{\sqrt{P^2+4P}}\right) >0$
for all $P>0$.
\end{proof}

\begin{lemma}~\label{lem01lockstructure}
Let $M\geqslant 1$ and let $(a,b)$ range over all pairs with $\max(a,b)=M$. Then
$\operatorname{Ind}(\FD^{(a,b)})$ is a strictly decreasing function of $m=\min(a,b)$.
\end{lemma}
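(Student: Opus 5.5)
With $M$ held fixed, the definition \eqref{eqq01indMP} reads
\[
  \operatorname{Ind}\bigl(\FD^{(a,b)}\bigr) = 2+M\left(1+\frac{1}{\AlphA(P)}\right),
\]
and the only quantity that changes with $m=\min(a,b)$ is $P=ab=Mm$. The proof therefore reduces to a single monotonicity statement about $1/\AlphA(P)$.

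\textbf{Step 1.} For pairs with $\max(a,b)=M$ we have $\{a,b\}=\{M,m\}$ with $1\leqslant m\leqslant M$. Hence $P=Mm$, and $m\mapsto Mm$ is strictly increasing in $m$ because $M\geqslant 1$. The two orderings $(a,b)=(M,m)$ and $(m,M)$ give the same $M$ and $P$, so $\operatorname{Ind}$ is well defined as a function of $m$ alone. This is the point to make explicit, since the statement lets $(a,b)$ range over ordered pairs.

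\textbf{Step 2.} By Lemma~\ref{lem01alphamonotone}, $\AlphA(P)$ is strictly increasing for $P>0$. It is also positive: by Proposition~\ref{prop01alphareduced}, $\AlphA(P)>P\geqslant 1$. Therefore $P\mapsto 1/\AlphA(P)$ is strictly decreasing on $P\geqslant 1$. Composing with $m\mapsto Mm$ from Step~1, $m\mapsto 1/\AlphA(Mm)$ is strictly decreasing.

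\textbf{Step 3.} Multiplying by the positive constant $M$ and adding the constants $2+M$ preserves strict monotonicity, so $\operatorname{Ind}(\FD^{(a,b)})$ is strictly decreasing in $m$ on $\{1,\dots,M\}$. If one wants an explicit gap, it can be written as
\[
  \frac{M\bigl(\AlphA(Mm')-\AlphA(Mm)\bigr)}{\AlphA(Mm)\,\AlphA(Mm')}>0 \qquad\text{for } m<m'.
\]

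I expect no real obstacle; the argument is a composition of monotone maps. The only subtle points are the bookkeeping in Step~1, namely that both orderings of the pair give the same value, and checking the sign of $\AlphA$ before inverting it. For $M=1$ the range of $m$ is a single point, so the statement holds vacuously.
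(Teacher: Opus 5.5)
Your proposal is correct and follows the paper's own argument: with $M$ fixed, $\operatorname{Ind}$ depends on $m$ only through $\AlphA(Mm)$, and the strict monotonicity of $\AlphA$ (Lemma~\ref{lem01alphamonotone}) gives the strict decrease. Your additional points spell out details the paper leaves implicit. These are the positivity of $\AlphA$ before inverting, the fact that both orderings of the pair give the same value, and the vacuous case $M=1$.
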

\begin{proof}
For fixed $M$, $\operatorname{Ind}(\FD^{(a,b)})=2+M(1+1/\AlphA (Mm))$ depends on $m$ only through $\AlphA (Mm)$. Since $\AlphA $ is strictly increasing (Lemma~\ref{lem01alphamonotone}), $\operatorname{Ind}$ is strictly decreasing in $m$.
\end{proof}

\begin{lemma}~\label{lem03diagonalincreasing}
The function $g(M):=\operatorname{Ind}(\FD^{(M,M)})$ is strictly increasing for $M>0$. Explicitly,
\begin{equation}~\label{eqq1lem03diagonalincreasing}
g(M) = 2+M+\dfrac{2}{M+\sqrt{M^2+4}}.
\end{equation}
\end{lemma}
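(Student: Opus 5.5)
We plan to specialise the definition~\eqref{eqq01indMP} to the diagonal $a=b=M$, simplify the resulting expression, and then check monotonicity. With $a=b=M$ we have $\max(a,b)=M$ and $P=ab=M^2$. Then
\[
\AlphA(M^2)=\frac{M^2+\sqrt{M^4+4M^2}}{2}=\frac{M\bigl(M+\sqrt{M^2+4}\bigr)}{2},
\]
using $\sqrt{M^4+4M^2}=M\sqrt{M^2+4}$ for $M>0$.

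Substituting into~\eqref{eqq01indMP} gives
\[
g(M)=2+M+\frac{M}{\AlphA(M^2)}=2+M+\frac{2}{M+\sqrt{M^2+4}},
\]
which is~\eqref{eqq1lem03diagonalincreasing}. It is convenient to rationalise the last term as well:
\[
\frac{2}{M+\sqrt{M^2+4}}=\frac{\sqrt{M^2+4}-M}{2}.
\]
This yields the equivalent form
\[
g(M)=2+\frac{M+\sqrt{M^2+4}}{2}.
\]

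In this last form $g$ is visibly a sum of a constant and two strictly increasing functions of $M$ on $(0,\infty)$, namely $M/2$ and $\sqrt{M^2+4}/2$. Hence $g$ is strictly increasing, and no derivative is needed. As a cross-check, the derivative route also works:
\[
g'(M)=\frac12\left(1+\frac{M}{\sqrt{M^2+4}}\right)>0.
\]

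The only point needing care is the simplification of $\AlphA(M^2)$. One must extract $M$ from the square root, which is valid since $M>0$, and track the factor $M$ in $M/\AlphA(M^2)$ correctly. Beyond that the argument is routine. Unlike Lemma~\ref{lem01lockstructure}, where the factor $M$ is fixed and only $\AlphA$ moves, here the factor $M$ and $\AlphA(M^2)$ grow together, and the rationalised form is what shows that the increase of $M$ dominates.
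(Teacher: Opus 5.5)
Your proof is correct. Its first half is exactly the paper's computation: you specialise to $P=M^2$, pull $M$ out of the square root, and obtain $g(M)=2+M+2/(M+\sqrt{M^2+4})$.

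The two routes diverge at the monotonicity step. The paper keeps the form $2+M+2/s(M)$ with $s(M)=M+\sqrt{M^2+4}$. In that form the last term is \emph{decreasing}, so a derivative check is needed, and the paper only gestures at it without writing it out. You instead rationalise to get $g(M)=2+\tfrac12 s(M)$, equivalently $2+\AlphA(M^2)/M$. There strict increase is immediate, since $g$ is a constant plus two increasing functions.

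Your version is therefore both more elementary and more complete: it supplies the argument the paper leaves implicit. Your cross-check derivative, $g'(M)=\tfrac12\bigl(1+M/\sqrt{M^2+4}\bigr)$, is also what the paper's route yields. Start from $g'(M)=1-2s'(M)/s(M)^2$ and simplify using $s'(M)=s(M)/\sqrt{M^2+4}$.
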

\begin{proof}
On the diagonal, $P=M^2$ and
\[
  \frac{M}{\AlphA (M^2)} = \frac{2}{M+\sqrt{M^2+4}}.
\]
Let $s(M)=M+\sqrt{M^2+4}$. Then $g(M)=2+M+2/s(M)$. Differentiating and verifying $g'(M)>0$ for $M>0$ proceeds exactly as in the original calculation.
\end{proof}
We illustrate that by Algorithm~\ref{alg01ind} for $\operatorname{Ind}(\FD^{(a,b)})$ based on~\eqref{eqq01indMP}.  To reduces the critical exponent to three elementary arithmetic operations once $a,b$ are fixed, which makes it convenient both for generating tables and for the numerical verification.
\begin{algorithm}[H]
\small
\caption{Compute $\operatorname{Ind}(\FD^{(a,b)})$}~\label{alg01ind}
\begin{algorithmic}[1]
\Require positive integers $a,b$
\Ensure $\operatorname{Ind}(f^{(a,b)})$
\State $P \gets a\cdot b$
\State $M \gets \max(a,b)$
\State $\AlphA \gets \dfrac{P+\sqrt{P^2+4P}}{2}$
\State \Return $2 + M\left(1+\dfrac{1}{\AlphA}\right)$
\end{algorithmic}
\end{algorithm}

\begin{theorem}~\label{thm01globalmin}
Among all $\FD^{(a,b)}$ with $a,b\geqslant 1$ integers,
$\operatorname{Ind}(\FD^{(a,b)})$ attains its global minimum uniquely at
$(a,b)=(1,1)$, with value $2+\varphi$.
\end{theorem}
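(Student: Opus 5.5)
We work directly with the closed form~\eqref{eqq01indMP}, $\operatorname{Ind}(\FD^{(a,b)})=2+M\bigl(1+1/\AlphA(P)\bigr)$ with $M=\max(a,b)$ and $P=ab$. First we evaluate at $(1,1)$: here $M=1$ and $P=1$, so $\AlphA(1)=(1+\sqrt5)/2=\varphi$. Hence $\operatorname{Ind}=2+1+1/\varphi=2+\varphi$, using $1+1/\varphi=\varphi$. This is also $g(1)$ in~\eqref{eqq1lem03diagonalincreasing}, since $2/(1+\sqrt5)=1/\varphi$.

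The inequality itself is handled by splitting according to $M$. If $M=1$, then $a=b=1$ and there is nothing to prove. If $M\geqslant 2$, we aim for the crude bound
\[
\operatorname{Ind}(\FD^{(a,b)})=2+M+\frac{M}{\AlphA(P)}>2+M\geqslant 4>2+\varphi\approx 3.618.
\]
This uses only $\AlphA(P)>0$ for $P\geqslant 1$ (Proposition~\ref{prop01alphareduced} gives $\AlphA>1$) and $M\geqslant 2$. It gives strict inequality for every pair other than $(1,1)$, which yields both minimality and uniqueness at once.

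To tie this to the structural lemmas, we can alternatively phrase the argument as follows. For fixed $M$, Lemma~\ref{lem01lockstructure} says the minimum over $m=\min(a,b)\leqslant M$ is attained at $m=M$, i.e.\ on the diagonal. Lemma~\ref{lem03diagonalincreasing} then says $g(M)$ is strictly increasing, so the diagonal minimum is $g(1)=2+\varphi$. Strictness in both lemmas gives uniqueness: if $(a,b)\neq(1,1)$, either $m<M$ (strict drop to the diagonal) or $M>1$ on the diagonal (strict increase of $g$).

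There is no real obstacle here. The only delicate point is to make sure uniqueness is genuinely strict. That is why I prefer the direct bound $2+M>2+\varphi$ for $M\geqslant2$, which sidesteps the derivative computation left implicit in Lemma~\ref{lem03diagonalincreasing}. The identity $1+1/\varphi=\varphi$ is the only algebraic fact needed to match the stated value.
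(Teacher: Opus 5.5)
Your proof is correct. Your ``alternative phrasing'' in the third paragraph is exactly the paper's proof: Lemma~\ref{lem01lockstructure} gives $\operatorname{Ind}(\FD^{(a,b)})\geqslant g(M)$, with strict inequality off the diagonal $m=M$, and Lemma~\ref{lem03diagonalincreasing} gives $g(M)>g(1)=2+\varphi$ for $M>1$.

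Your primary argument takes a genuinely different and more elementary route. The term $M/\alpha(P)$ is positive, so $\operatorname{Ind}(\FD^{(a,b)})>2+M$. Integrality then forces $M\geqslant 2$ for every pair other than $(1,1)$, which gives $\operatorname{Ind}>4>2+\varphi$. Strictness, and hence uniqueness, comes for free. This uses neither the monotonicity of $\alpha(P)$ nor the derivative check $g'(M)>0$, which the paper only asserts.

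The paper's route gives something in exchange: the full monotone structure (decreasing in $m$ at fixed $M$, increasing along the diagonal), and it does not depend on integrality. The same two-step argument would locate the minimum at $(1,1)$ over all real $a,b\geqslant 1$. Your crude bound is not enough there; for instance $M=1.1$ gives $2+M<2+\varphi$. For the statement as posed, over integers, your version is the cleaner and fully self-contained proof.
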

\begin{proof}
Let $M=\max(a,b)$. By Lemma~\ref{lem01lockstructure},
$\operatorname{Ind}(\FD^{(a,b)}) \geqslant g(M)$, with equality if and only if $a=b=M$. By Lemma~\ref{lem03diagonalincreasing}, $g(M)\geqslant g(1)$ with equality if and only if $M=1$. Hence $\operatorname{Ind}(\FD^{(a,b)})\geqslant g(1)=2+\varphi$, with equality if and only if $(a,b)=(1,1)$.
\end{proof}

\begin{corollary}~\label{cor01symmetry}
For $a,b\geqslant 1$, $\operatorname{Ind}(\FD^{(a,b)})=\operatorname{Ind}(\FD^{(b,a)})$.
\end{corollary}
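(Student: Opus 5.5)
The claim is Corollary~\ref{cor01symmetry}, that $\operatorname{Ind}(\FD^{(a,b)})=\operatorname{Ind}(\FD^{(b,a)})$. I will read this off directly from the defining formula~\eqref{eqq01indMP}. The observation is that $\operatorname{Ind}$ is defined entirely in terms of the two quantities $M=\max(a,b)$ and $P=ab$. Both are symmetric functions of the ordered pair $(a,b)$, so the plan has three steps.

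\textbf{Step 1.} Record that the two inputs are unchanged by the swap:
\[
\max(a,b)=\max(b,a), \qquad ab=ba.
\]
So the pair $(M,P)$ attached to $(a,b)$ equals the pair attached to $(b,a)$.

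\textbf{Step 2.} Observe that $\AlphA(P)=\bigl(P+\sqrt{P^2+4P}\bigr)/2$ depends only on $P$. Hence it takes the same value for $(a,b)$ and $(b,a)$. This also matches the earlier remark after Lemma~\ref{lem01alphaquadratic} that $\AlphA(a,b)$ depends only on the product $ab$.

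\textbf{Step 3.} Substitute into $2+M\bigl(1+1/\AlphA(P)\bigr)$. Every ingredient agrees, so the two indices are equal.

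Equivalently, Algorithm~\ref{alg01ind} reads $a,b$ only through lines computing $a\cdot b$ and $\max(a,b)$. Its output is therefore invariant under exchanging its inputs.

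There is no real obstacle here; the argument is a direct inspection of the definition. One point is worth a sentence. This symmetry is a property of the quantity $\operatorname{Ind}$ as defined in~\eqref{eqq01indMP}. It is not derived from the substitution $\sigma_a$ relating $\FD^{(a,b)}$ and $\FD^{(b,a)}$ in Theorem~\ref{thm01desub}. Nor does it contradict the asymmetry~\eqref{eqq1cedifference} of the formula~\eqref{eqq1biperiodiccritical}: that formula is a different expression, and it is not symmetric in $M$ and $P$.
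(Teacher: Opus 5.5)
Your proposal is correct and matches the paper's proof, which likewise just observes that $\operatorname{Ind}$ in \eqref{eqq01indMP} depends on $(a,b)$ only through $M=\max(a,b)$ and $P=ab$, both symmetric under $a\leftrightarrow b$. One small wording fix in your closing aside: \eqref{eqq1biperiodiccritical} is better described as not being a function of $(M,P)$ alone (equivalently, not symmetric in $a$ and $b$), rather than as ``not symmetric in $M$ and $P$''; this does not affect the argument.
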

\begin{proof}
Immediate from~\eqref{eqq01indMP}, since both $M=\max(a,b)$ and $P=ab$ are symmetric in $(a,b)$.
\end{proof}

To generate the underlying word itself — needed, for instance, to check
Algorithm~\ref{alg01ind}'s output against an empirical repetition search. Algorithm~\ref{alg01wordgen}
builds $\FD^{(a,b,n)}$ bottom-up from the two base cases, by applying the
parity-dependent exponentiation rule at each step; its output length
$|w_n|$ coincides with $q_n$ from~\eqref{eqq01qnrecurrence} by
construction.
\begin{algorithm}[H]
\small
\caption{Generate the finite biperiodic Fibonacci word $\FD^{(a,b,n)}$}
\label{alg01wordgen}
\begin{algorithmic}[1]
\Require positive integers $a,b$; target index $n$
\Ensure the word $\FD^{(a,b,n)}$ as a string
\State $w_0 \gets \varepsilon$,\quad $w_1 \gets \texttt{"0"}$,\quad
$w_2 \gets \texttt{"0"}\times(a-1) \,\Vert\, \texttt{"1"}$
\For{$k = 3$ \textbf{to} $n$}
  \If{$k$ is even}
    \State $w_k \gets w_{k-1}^{\,a} \,\Vert\, w_{k-2}$
  \Else
    \State $w_k \gets w_{k-1}^{\,b} \,\Vert\, w_{k-2}$
  \EndIf
\EndFor
\State \Return $w_n$
\end{algorithmic}
\end{algorithm}
%%%%%%%%%%%%%%%%%%%%%%%%%%%%
\section{Conclusion and Discussion}

We set out from a single observation: the letter-frequency parameter
$\alpha(a,b)$ of the biperiodic Fibonacci word $\FD^{(a,b)}$ is symmetric
in $a$ and $b$, while the quoted critical exponent formula is not. This
suggested some structural correspondence between $\FD^{(a,b)}$ and
$\FD^{(b,a)}$ that a symmetric algebraic quantity alone could not explain. 
Realising $\FD^{(a,b)}$ as the standard Sturmian sequence associated with the
purely periodic continued fraction $[a,b,a,b,\ldots]$ shows that $\FD^{(a,b)} = \sigma_a\bigl(\FD^{(b,a)}\bigr)$
exactly, with no relabelling or bounded correction term
(Theorem~\ref{thm01desub}), where $\sigma_a\colon 0\mapsto 0^a1,\,1\mapsto 0$.
Corollary~\ref{cor01sturmian} emphasizes that $\FD^{(a,b)}$ is Sturmian for
\emph{every} $a,b\geqslant 1$, so that balance and abelian complexity are
constantly equal to $1$ and $2$, respectively. 

Return words admit a complete, explicitly verified description
Theorems~\ref{thm01return0} and \ref{thm02return1}, those of $0$ are
$\{0,01\}$ independently of $a$ and $b$, while those of $1$ are
$\{1\cdot0^a,\,1\cdot0^{a+1}\}$. Thus, the exchange $a\leftrightarrow b$ acts on
them by literally substituting the exponent $a\mapsto b$
(Corollary~\ref{cor01returnduality}), providing the cleanest and most
explicit instance of the paper's central mechanism. Palindromic prefixes of arbitrarily large length exist
(Theorem~\ref{thm01centralpalindrome}), which is consistent with, and gives
an independent proof of, the standardness established in
Corollary~\ref{cor01sturmian}.

The transformation of every combinatorial invariant of $\FD^{(a,b)}$ under
$a\leftrightarrow b$ is governed by one explicit morphism, $\sigma_a$,
and the asymmetry of that transformation traces back to the same
length-redistribution mechanism in every case we examined. This provides,
we believe, a more satisfying explanation than a case-by-case analysis of
individual invariants.

\subsection{Open problems}

While this paper establishes several duality results for the family of biperiodic Fibonacci words, a number of natural and important questions remain open.

\begin{openproblem}~\label{op03palcomplexity}
Compute the palindromic complexity $P_{\FD^{(a,b)}}(n)$ for every $n$, beyond the special lengths $|\FD^{(a,b)}_n|-2$ treated in Corollary~\ref{cor002arbitrary}. 
For the classical case $a=b=1$, computation on the Fibonacci word yields $P(1)=2$, $P(2)=1$, $P(3)=2$, $P(4)=1$, suggesting the alternating pattern
\[
  P(n) = 
  \begin{cases}
    2 & \text{if $n$ is odd}, \\
    1 & \text{if $n$ is even, $n\geqslant 2$}.
  \end{cases}
\]
It is open whether an analogous closed-form expression exists for general $(a,b)$ and how it depends on the parameters.
\end{openproblem}

Any duality result for palindromic complexity under the exchange $a\leftrightarrow b$ must account for the parity-dependent endings ($\cdots 01$ or $\cdots 10$) of the standard prefixes $\FD^{(a,b)}_n$. This appears to require a finer analysis than the block decomposition that sufficed for frequencies and return words.

In addition, the following technical questions are left unresolved:

\begin{enumerate}
\item \textbf{Critical exponents.} 
  The lower bounds obtained in Propositions~\ref{prop01celowerbound} and~\ref{prop01icelowerbound} for $\CE(\FD^{(a,b)})$ and $\ice(\FD^{(a,b)})$ are not tight. Identifying the exact extremal repetitions and deriving closed-form expressions remains a central open problem.

\item \textbf{Palindromic complexity and duality.} 
  A complete closed-form description of $P_{\FD^{(a,b)}}(n)$ for all $n$, together with its behaviour under $a\leftrightarrow b$, as posed in Open Problem~\ref{op03palcomplexity}.

\item \textbf{Identification with the palindromisation map.} 
  We conjecture that the palindromisation operator $\Phi$ used in~\cite{Ramirez2015} coincides, up to a bounded correction, with the deletion of the last two letters of $\FD^{(a,b)}_n$. Direct verification of this identification is desirable.
\end{enumerate}

\subsection{Directions for future work}

Beyond resolving the items above, three broader directions follow
naturally from the framework developed here.

The return-word lengths $a+1$ and $a+2$ computed in
Section~\ref{sec5duality} are the base level of the Ostrowski
numeration system attached to $\chi(a,b)=[a,b,a,b,\ldots]$.
Developing this numeration system in full (i.e., return words of the
higher-level factors $s_n$, not just of single letters) would likely resolve
the exact critical exponent and palindromic complexity problems simultaneously, since both are
classically controlled by the same numeration data for other Sturmian
families.

The desubstitution mechanism of
Theorem~\ref{thm01desub} relies only on the fact that the directive sequence of
$\FD^{(a,b)}$ is periodic with period $2$. The same argument applies
verbatim to any $k$-periodic directive sequence
$(d_1,\ldots,d_k,d_1,\ldots,d_k,\ldots)$. Desubstitution by
$\sigma_{d_1}$ yields the word with cyclically shifted directive sequence.
This suggests a full \emph{cyclic duality} of order $k$ acting on such families, of which the present results are the $k=2$ case. Whether the return-word description of
Section~\ref{sec5duality} generalises cleanly for $k>2$ remains, to our knowledge, unexplored.

Finally, Ram\'irez, and Rubiano, original motivation for $\FD^{(a,b)}$ was geometric -- an
associated fractal curve and, in a later extension, a family of
double-square polyominoes~\cite{FibSnowflake2024}. It would be natural
to ask whether the duality map $\sigma_a$ has a geometric counterpart
on the fractal curves themselves.

The puzzle we started with turned out to be based on an incorrect
premise: letter frequency is not invariant under
$a\leftrightarrow b$. We regard the resolution as a better outcome rather than explaining one isolated coincidence, the
Desubstitution Theorem explains, uniformly and exactly, why every
combinatorial statistic of $\FD^{(a,b)}$ that depends on the interaction
between letter identity and block length fails to be symmetric under
parameter exchange, and it does so through a single, explicit,
computable morphism.

%%%%%%%%%%%%%%%%%%%%%%%%%%%%%%%
%%%%%%%%%%%%%%%%%%%%%%%%%%%%%%%%%%

%%%%%%%%%%%%%%55

\end{document}